\numberwithin{equation}{section}
\newtheorem{theorem}{Theorem}[section]
\newtheorem{lemma}{Lemma}[section]
\newtheorem{remark}{Remark}[section]
\newtheorem{assumption}{Assumption}[section]
\def\QEDopen{{\setlength{\fboxsep}{0pt}\setlength{\fboxrule}{0.2pt}\fbox{\rule[0pt]{0pt}{1.3ex}\rule[0pt]{1.3ex}{0pt}}}} 
\def\QED{\QEDopen} 
\def\endproof{\hspace*{\fill}~\QED\par\endtrivlist\unskip}
\begin{document}
\title{Jump stochastic differential equations with non-Lipschitz and superlinearly growing coefficients\footnotemark[1]}
\author{Yuchao Dong\footnotemark[2]}
\maketitle
\renewcommand{\thefootnote}{\fnsymbol{footnote}}

\footnotetext[1]{This research was supported by the National Natural Science Foundation of China (Grants \#11171076), and by Science and Technology Commission, Shanghai Municipality (Grant No.14XD1400400). The author would like to thank his advisor, Prof. Shanjian Tang from Fudan University, for the helpful comments and discussions.}

\footnotetext[2]{Department of Finance and Control Sciences, School of Mathematical Sciences, Fudan University, Shanghai 200433, China. yuchaodong13@fudan.edu.cn (Yuchao Dong).}
\maketitle
\abstract{In the paper, we consider the no-explosion condition and pathwise uniqueness for SDEs driven by a Poisson random measure with coefficients that are super-linear and non-Lipschitz. We give a comparison theorem in the one-dimensional case under some additional condition. Moreover, we  study the non-contact property and the continuity with respect to the space variable of the stochastic flow. As an application, we will show that there exists a unique strong solution for SDEs with coefficients like $x\log|x|$.}
\par
\indent AMS Subject Classification: 60H10; 60H20; 60J75\par
\indent Keywords: jump stochastic differential equations, superlinear and non-Lipschitz condition, non-explosion, pathwise uniqueness, non-contact property, comparison  principle
\section{Introduction and preliminaries}
Consider the following jump-type stochastic differential equation (SDE):
\begin{equation}
X_t=x+\int_0^tf(X_{s-})ds+\int_0^tg(X_{s-})dW_s+\int_0^t\int_{\mathcal U}h(X_{s-},u)\tilde N(du,ds).\label{SDE}
\end{equation}
 Here $x \in \mathbb R^m$, $W_t$ is a Brownian motion on $\mathbb R^n$ and $\tilde N(du,dt)$ is a compensated Poisson random measure. The functions $f$, $g$, $h$ are defined as
\begin{equation*}
\begin{aligned}
f:\mathbb R^m \to \mathbb R^m,
g:\mathbb R^m \to \mathbb R^{m \times n}, \text{ and }
h:\mathbb R^m \times \mathcal U \to \mathbb R^m.
\end{aligned}
\end{equation*}
The purpose of this paper is to  study the no-explosion condition and pathwise uniqueness for the solutions of (\ref{SDE}) with super-linear and non-Lipschitz coefficients  including the functions like $x\log|x|$. Some other properties of the solutions, including comparison principle, non-contact property and continuity with respect to $x$ of the stochastic flow, are also proved under some additional assumptions. As an application, we prove that the following SDE:
\begin{equation*}
\begin{aligned}
X_t=x&+\int_0^{t}X_{s-}\log|X_{s-}|ds +\int_0^t X_{s-}\sqrt{|\log|X_{s-}||}dW_s\\
&+\int_0^{t}\int_{\mathcal U}X_{s-}\sqrt{|\log|X_{s-}||}u\tilde N(du,ds),
\end{aligned}
\end{equation*}
possesses a unique strong solution. \\
\quad\\
SDEs with jumps plays an important role in many applications. For example, in a  financial market,  a stock price is described  by a process that admits random jumps to characterize a sudden shift of the price. Under linear growth and Lipschitz conditions, the existence and the uniqueness of strong solutions for jump-type SDEs can be established by arguments based on fixed-point theory and Gronwall lemma: see e.g. \cite{Ikeda_Watanabe_1989}. There are numerical papers discussing the existence and uniqueness of weak or strong solutions in  relaxed conditions. For the following equation
\begin{equation}\label{SDE.example}
dX_t=F(X_{t-})dL_t,
\end{equation}
Bass \cite{Bass_2003} and Komatsu \cite{Komatsu_1982} showed that (\ref{SDE.example}) admits a unique strong solution if $L_t$ is a symmetric $\alpha$-stable process with $1 < \alpha <2$ and the coefficient $F(\cdot)$ is continuous and bounded with the modulus of continuity $z \to \rho(z)$ satisfying
\[\int_{0+}\frac{dz}{\rho(z)^{\alpha}}=+\infty.\]
This result is an extension  of the Yamada-Watanabe criterion for SDEs driven by Brownian motions, see \cite{Karatzas_Shreve_1991}. Fu and Li \cite{Fu_Li_2010} studied the existence and uniqueness of strong solutions for (\ref{SDE.example}) driven by spectrally positive $\alpha$-stable L\'evy noises which included the case $F(x)=x^{\frac{1}{\alpha}}$. It was extended by Li and Mytnik \cite{Li_Mytnik_2011} to more general cases. All these three works require that the coefficients have a linear growth, and thus the coefficients like $x\log|x|$ are not included. The results of Yamada-Watanabe (weak existence and pathwise uniqueness imply uniqueness in the sense of probability law and strong existence for the SDEs) has also been extended to jump-type SDEs. See  \cite[Theorem 137]{Situ_2005} and Barczy et al \cite{Barczy_Li_Pap_2015}. For more details as well as  applications
to finance, see the survey paper of Bass \cite{Bass_2004} and the monograph of Applebaum \cite{Applebaum_2004}.\\
\qquad\\
Compared with \cite{Fu_Li_2010} and \cite{Li_Mytnik_2011}, our SDE is allowed to have a super-linear growth with respect to the unknown variable. Moreover, our pathwise uniqueness result for the solutions do not require that the function $h$ is non-decreasing with respect to $x$ which is essential in \cite{Fu_Li_2010} and \cite{Li_Mytnik_2011}.However, the non-decreasing condition is still required in our comparison principle.  The stochastic flow associated with (\ref{SDE}) are also considered. For the case $h=0$ in SDE (\ref{SDE}), Fang and Zhang \cite{Fang_Zhang_2005}  proved the non-contact property (i.e. if $x \neq y$, then almost surely $X_t(x)\neq X_t(y)$ for all $t>0$) and the comparison principle in the one-dimensional case (i.e.  if $x \le y$, then $X_t(x) \le X_t(y)$). Furthermore, it holds that
\begin{equation}\label{SDE.introduction}
\lim_{|x| \to +\infty}|X_t(x)|=+\infty, \text{ in probability.}
\end{equation}
Since the appearance of random jumps, these properties do not hold in general for our SDEs. Here are two counterexample. Consider the following SDE:
\begin{equation}\label{SDE.jump}
X_t=x+\int_0^t\int_{\mathcal U} (-X_{s-})N(du,ds)
\end{equation}
where  $N(du,dt)$ is a Poisson random measure with $\mu(\mathcal U) <+\infty$.\\
Define $\tau_n:=\inf\{t>0;N(\mathcal U \times (0,t])=n\}$ and $\tau_0=0$. The solution for (\ref{SDE.jump}) is  $X_t=x1_{\tau_1 > t}$. It is easy to see that (\ref{SDE.introduction}) and the non-contact property do not hold, since $X_{\tau_1}=0$ for all $x$.  Consider another example:
\begin{equation}\label{SDE.comparison}
X_t=x+\int_0^t \int_{\mathcal U}(-2X_{s-})N(du,ds).
\end{equation}
If $x=1$, then the solution $X$ is $X_t=\sum_{n=0}^{+\infty}(-1)^n1_{\tau_n \le t <\tau_{n+1}}$. For $x=-1$, the solution $\tilde X$ is $\tilde X_t=\sum_{n=0}^{+\infty}(-1)^{(n+1)}1_{\tau_n \le t <\tau_{n+1}}$. Thus the comparison principle is violated for these two solutions. Note that Bahlali et al \cite{Bahlali_Eddahbi_Essaky_2003} proved a comparison principle  for backward stochastic differential equations (BSDEs) with random jumps.  To summarize, as in \cite{Kunita_2004}, we prove the comparison principle for a non-decreasing coefficient $h$ and the non-contact property under the assumption that  the mapping $x \to x+h(x,u)$ is homeomorphic.\\
\qquad\\
We end up this section with introducing some notations. Let $(\Omega,\mathcal F, \{\mathcal F_t\},\mathcal P)$ be a filtered probability space satisfying the usual hypotheses. $W_t$ is a standard $\{\mathcal F_t\}$-Brownian motion on $\mathbb R^m$. Let $\mathcal U$ be a metric space and $\mu$ a $\sigma$-finite measure on $\mathcal U$. $N(du,dt)$ is a $\{\mathcal F_t\}$-adapted Poisson random measure with intensity measure $\mu(du)dt$. $\tilde N(du,dt):=N(du,dt)-\mu(du)dt$ is the associated compensated Poisson random measure. Suppose that $\{W_t\}$ and $N(du,dt)$ are mutually independent. A strong solution of (\ref{SDE}) with its lifetime $\tau$ is an  $\{\mathcal F_t\}$-adapted c\`adl\`ag process $\{X_t\}$ and a stopping time $\tau$ such that: \\
\qquad(1) the stopping time $\tau$ is defined as $\tau:=\lim_{n \to +\infty}\tau_n$ with $\tau_n:=\inf\{t>0,|X_t|\ge n\}$,\\
\qquad(2) the equation (\ref{SDE})  holds for all $t < \tau$ almost surely.\\
Note that one can use $X_s$ instead of $X_{s-}$ in the integral with respect to $dt$ and $dW_t$ on the right hand side of (\ref{SDE}), since $X_s \neq X_{s-}$ for at most countable $s$.\\
\quad\\
For a matrix $A$, we denote by $\left \|A \right \|$ the Hilbert-Schmidt norm: $\left \|A \right \|^2=\text{tr}(A^TA)=\sum_{i,j}A^2_{ij}$; for a vector $x \in \mathbb R^m$, $|x|$ is the Euclidean norm. Given a function $f$ on $\mathbb R^m$ and $x,y \in \mathbb R^m$, we note
\[\Delta_yf(x):=f(y)-f(x) \text{ and } D_yf(x):=\Delta_yf(x)-\left \langle \nabla f(x),y-x\right \rangle.\]
Throughout the paper, the constant $C$ in an inequality is universal  and is allowed to change from line to line. Whenever necessary, we use $C(p)$ to emphasize its dependence on the parameter $p$.
\section{No-explosion condition for the SDEs}
In this section, we will show that the solution for (\ref{SDE}) do not explode under the following assumptions.
\begin{assumption}\label{assumption.nonexplosion}
There exists a nondecreasing $C^1$ function $\rho$ defined on a neighborhood $[K, +\infty]$ of $+\infty$ satisfying: \\
(1)$\lim_{s \to \infty}\rho(s)=+\infty$, (2) $\rho(s) \ge 1$, (3) $\lim_{s \to \infty}\frac{s\rho'(s)}{\rho(s)}=0$, \\(4) $\int_{0}^{\infty}\frac{1}{s\rho(s)+1}ds=+\infty$. \\
For all $x \in \mathbb R^m$, we have
\begin{equation*}
\begin{aligned}
&|f(x)| \le C(|x|\rho(|x|^2)+1),\\
&\left\| g(x) \right\|^2 \le C(|x|^2\rho(|x|^2)+1),\\
&\int _{\mathcal U}|h(x,u)|^2\mu(du)\le C(|x|^2\rho(|x|^2)+1).
\end{aligned}
\end{equation*}
\end{assumption}
By the property (3) in the assumption, for any $\varepsilon>0$, we see that $\rho'(s)\le \varepsilon \frac{\rho(s)}{s}$ for $s$ sufficiently large which implies that the coefficients can not increase faster than $|x|^{1+\varepsilon}$. This is also necessary in some sense for no-explosion condition. To see this, consider the function $x_t=(1-t)^{-\frac{1}{\varepsilon}}$ which satisfies the ODE:
\[dx_t=\frac{1}{\varepsilon}x_t^{1+\varepsilon},\]
and explodes at time $t=1$. Typical examples satisfying the above conditions are functions $\rho(s)=\log s$, $\rho(s)=(\log s \cdot \log \log s )$.
\begin{theorem}\label{thm.nonexplosion}
Let Assumption \ref{assumption.nonexplosion} be satisfied.  Then the solution for the SDE (\ref{SDE}) has no explosion, i.e. $\mathcal P(\tau=+\infty)=1$.
\end{theorem}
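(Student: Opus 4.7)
My approach is to construct a Lyapunov function from the divergent integral in condition~(4), apply It\^o's formula with localization at the sequence $\tau_n$, and then close the argument by a Markov-type estimate. After extending $\rho$ smoothly to $[0,+\infty)$ (e.g.\ by setting $\rho(s):=\rho(K)$ on $[0,K]$), define
\[
\phi(s):=\int_0^s\frac{dr}{r\rho(r)+1},\qquad V(x):=\phi(|x|^2).
\]
Conditions~(1), (2), and~(4) ensure that $\phi$ is $C^2$ on $[0,+\infty)$, strictly increasing, and satisfies $\phi(s)\to+\infty$ as $s\to+\infty$. Direct differentiation yields $\phi'(s)=1/(s\rho(s)+1)>0$ and $\phi''(s)=-(\rho(s)+s\rho'(s))/(s\rho(s)+1)^2\le 0$, so $\phi$ is concave.

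\textbf{It\^o's formula and drift bound.} Applying It\^o's formula to $V(X_{t\wedge\tau_n})$ and discarding the local-martingale parts, the finite-variation drift $\mathcal L V(x)$ splits into three pieces: the first-order term $2\phi'(|x|^2)\langle x,f(x)\rangle$; the diffusion term $\phi'(|x|^2)\|g(x)\|^2+2\phi''(|x|^2)|g(x)^Tx|^2$; and the Poisson compensator
\[
\int_{\mathcal U}\!\bigl[\phi(|x+h(x,u)|^2)-\phi(|x|^2)-2\phi'(|x|^2)\langle x,h(x,u)\rangle\bigr]\mu(du).
\]
Using $|f|\le C(|x|\rho(|x|^2)+1)$, the first piece is bounded by $2C(|x|^2\rho(|x|^2)+|x|)/(|x|^2\rho(|x|^2)+1)$, which is uniformly bounded in $x$. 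In the diffusion piece, $\phi'(|x|^2)\|g\|^2\le C$ because $\phi'(|x|^2)$ exactly cancels the growth of $\|g\|^2$, while the $\phi''$ term is non-positive by concavity. For the compensator, the tangent-line inequality for concave $\phi$ gives
\[
\phi(|x+h|^2)-\phi(|x|^2)-2\phi'(|x|^2)\langle x,h\rangle\le\phi'(|x|^2)\,|h|^2,
\]
so the integrand is dominated by $\phi'(|x|^2)\int_{\mathcal U}|h|^2\mu(du)\le C$. Altogether $\mathcal L V(x)\le C$ uniformly in $x$.

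\textbf{Localization and conclusion.} On $[0,t\wedge\tau_n]$ the process is bounded by $n$, and since $|\nabla V|$ is globally bounded (so that $|V(x+h)-V(x)|\le C|h|$), the Brownian and compensated Poisson integrals appearing in It\^o are genuine martingales up to $\tau_n$. Taking expectations gives
\[
E[V(X_{t\wedge\tau_n})]\le V(x)+Ct.
\]
Since $V$ is increasing in $|x|$ and $|X_{\tau_n}|\ge n$ by right-continuity, $V(X_{\tau_n})\ge\phi(n^2)$ on $\{\tau_n\le t\}$, so Markov's inequality yields $\mathcal P(\tau_n\le t)\le(V(x)+Ct)/\phi(n^2)$. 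Letting $n\to\infty$ and using $\phi(n^2)\to+\infty$ shows $\mathcal P(\tau\le t)=0$ for every $t>0$, hence $\tau=+\infty$ almost surely.

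\textbf{Main obstacle.} The decisive step is controlling the Poisson compensator. A direct second-order Taylor expansion would leave a remainder of the form $\phi''(\eta)(|x+h|^2-|x|^2)^2$, which the $L^2(\mu)$-bound on $h$ alone does not control uniformly. Concavity of $\phi$ bypasses the $\phi''$ remainder and reduces the bound to the cleaner term $\phi'(|x|^2)|h|^2$; the prescription $\phi'(s)=1/(s\rho(s)+1)$ is then tailored to match the hypothesis $\int|h|^2\mu(du)\le C(|x|^2\rho(|x|^2)+1)$. Condition~(3) is not used explicitly in this argument; it rather certifies that the admissible growth of $f$, $g$, $h$ stays strictly sub-$|x|^{1+\varepsilon}$, in agreement with the explosion example that follows the assumption.
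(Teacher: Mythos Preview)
Your proof is correct and follows the same overall strategy as the paper: build a Lyapunov function from the divergent integral in condition~(4), exploit concavity to dispose of the Poisson compensator, localize at $\tau_n$, and conclude from the blow-up of the Lyapunov function.

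The one notable difference is in the choice of Lyapunov function. The paper sets $\Psi(\xi)=\int_0^\xi\frac{ds}{s\rho(s)+1}$ (your $\phi$) but then works with $\Phi=\exp(\Psi)$. Because $\Phi'(\xi)(\xi\rho(\xi)+1)=\Phi(\xi)$, the drift bound comes out \emph{linear} in $\Phi$, so the paper invokes Gronwall to obtain $E[\Phi(|X_{t\wedge\tau_R}|^2)]\le\Phi(|x|^2)e^{CT}$ and finishes by a Fatou/contradiction argument. You instead use $\phi$ directly; since $\phi'(\xi)(\xi\rho(\xi)+1)=1$, the generator is bounded by an absolute constant and you get the cleaner estimate $E[V(X_{t\wedge\tau_n})]\le V(x)+Ct$, with no Gronwall needed. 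Your observation that $|\nabla V|$ is globally bounded (because $\rho\ge1$ forces $2|x|/(|x|^2\rho(|x|^2)+1)\le1$) is a nice touch that streamlines the martingale verification. Your remark that condition~(3) is not actually used in the argument is also accurate; the paper's proof does not invoke it either.

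Two very minor points: your constant extension of $\rho$ to $[0,K]$ need not be $C^1$ at $K$, but this is harmless since concavity of $\phi$ only requires $\phi'$ nonincreasing, which follows from $\rho+s\rho'\ge0$ piecewise; and the claim ``the process is bounded by $n$ on $[0,t\wedge\tau_n]$'' should really read ``$X_{s-}$ is bounded by $n$'', since a jump at $\tau_n$ may overshoot---but as the stochastic integrands depend on $X_{s-}$, your argument goes through unchanged.
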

\begin{proof}
Without loss of generality, we assume that $K=0$ in Assumption \ref{assumption.nonexplosion}. Define the functions
\[\Psi(\xi)=\int_{0}^{\xi}\frac{1}{s\rho(s)+1}ds\text{ and }\Phi(\xi)=\exp(\Psi(\xi)), \qquad \xi\ge0.\]
We have
\begin{equation}\label{Phi.derivative}
\Phi'(\xi)=\frac{\Phi(\xi)}{\xi \rho(\xi)+1}\text{ and }\Phi''(\xi)=\frac{\Phi(\xi)(1-\rho(\xi)-\xi\rho'(\xi))}{\xi \rho(\xi)+1}.
\end{equation}
According to the assumptions, we see that $1-\rho(\xi)-\xi\rho'(\xi) \le 0$ which implies that $\Phi$ is a concave function. Thus one gets that
\begin{equation}\label{Phi.concave}
\Phi(y)-\Phi(x)-\Phi'(x)(y-x) \le 0,
\end{equation}
for any $x,y \ge 0$.\\
Let $X$ be a solution to (\ref{SDE}) and $\tau$ its lifetime. Define the stopping time
\[\tau_{R}:=\inf\{t>0,|X_t| \ge R \}, \text{ for any } R\ge0.\]
It is clear that $\tau_R$ tends to $\tau$ as $R \to \infty$. By It\^o-formula, we have
\begin{equation*}
\begin{aligned}
d|X_t|^2&=2\left\langle X_{t-},dX_t\right \rangle+\left \|g(X_{t-})\right\|^2dt+\int_{\mathcal U}|h(X_{t-},u)|^2N(du,dt)\\
&=\big\{2\left \langle X_{t-},f(X_{t-})\right \rangle+\left \|g(X_{t-})\right\|^2+\int_{\mathcal U}|h(X_{t-},u)|^2\mu(du)\big\}dt\\
&+2\left \langle X_{t-},g(X_{t-})\right \rangle dW_t+\int_{\mathcal U}[|X_{t-}+h(X_{t-},u)|^2-|X_{t-}|^2]\tilde N(du,dt).
\end{aligned}
\end{equation*}
and
\begin{equation*}
\begin{aligned}
d\Phi(|X_t|^2)&=\Phi'(|X_{t-}|^2)d|X_t|^2+2\Phi''(|X_{t-}|^2)|\left\langle X_{t-},g(X_{t-})\right \rangle|^2dt\\
&+\int_{\mathcal U}D_{|X_{t-}+h(X_{t-},u)|^2}\Phi(|X_{t-}|^2)N(du,dt).\\
\end{aligned}
\end{equation*}
Thus
\[\Phi(|X_t|^2)=I_1(t)+I_2(t)+M_t,\]
with
\begin{equation*}
\begin{aligned}
&I_1(t)=\int_0^t\Phi'(|X_{s-}|^2)\big\{2\left \langle X_{s-},f(X_{s-})\right \rangle+\left \|g(X_{s-})\right \|^2+\int_{\mathcal U}|h(X_{s-},u)|^2\mu(du)\big\}ds\\
&\qquad +\int_0^t2\Phi''(|X_{s-}|^2)|\left\langle X_{s-},g(X_{s-})\right \rangle|^2ds,\\
&I_2(t)=\int_0^t\int_{\mathcal U}D_{|X_{s-}+h(X_{s-},u)|^2}\Phi(|X_{s-}|^2)\mu(du)ds,\\
&M_t=\int_0^t2\Phi'(|X_{s-}|^2)\left \langle X_{s-},g(X_{s-})\right \rangle dW_s+\int_0^t\int_{\mathcal U}\Delta_{|X_{s-}+h(X_{s-},u)|^2}\Phi(|X_{s-})|^2)\tilde N(du,ds).
\end{aligned}
\end{equation*}
By the assumption, we have
\[2\left \langle X_{s-},f(X_{s-})\right \rangle+\left \|g(X_{s-})\right \|^2+\int_{\mathcal U}|h(X_{s-},u)|^2\mu(du) \le C(| X_{s-}|^2\rho(| X_{s-}|^2)+1).\]
According to (\ref{Phi.derivative}) and $\Phi''(\xi) \le 0$, we get that
\[I_1 \le C\int_0^t\Phi(|X_{s-}|^2)ds.\]
Moreover, (\ref{Phi.concave})  implies that $I_2(t) \le 0$.\\
Since the process is uniformly bounded before $\tau_R$, we see that the stochastic integral part $M_t$ is a martingale. Hence, after taking expectation, we have
\begin{equation}\label{estimation.Gronwall}
E[\Phi(|X_{t\wedge \tau_{R}}|^2)] \le \Phi(|X_0|^2)+C_1E[\int_0^t\Phi(|X_{(s\wedge \tau_{R})-}|^2)ds],
\end{equation}
for some constant $C_1$.\\
We can use $|X_{s\wedge \tau_{R}}|^2$ instead of $|X_{(s\wedge \tau_{R})-}|^2$ since $X_s \neq X_{s-}$ only for at most countable $s$. Thus
\[E[\Phi(|X_{t\wedge \tau_{R}}|^2)] \le \Phi(|X_0|^2)+C_1E[\int_0^t\Phi(|X_{s\wedge \tau_{R}}|^2)ds].
\]
By Gronwall lemma, we get that there exists a constant $C_2 >0$ such that for all $t \ge 0$ and $R >0$,
\begin{equation*}
E[\Phi(|X_{t\wedge \tau_{R}}|^2)] \le \Phi(|X_0|^2)e^{C_2T}.
\end{equation*}
Letting $R \to \infty$ in above inequality, by Fatou lemma, we get
\begin{equation}
 E[\Phi(|X_{t\wedge \tau}|^2)] \le \Phi(|X_0|^2)e^{C_2T}.\label{estimation.Phi}
 \end{equation}
Now if $P(\tau <\infty)>0$, one can find  a large $T>0$ such that $P(\tau \le T)>0$.
Taking $t=T$ in (\ref{estimation.Phi}), we get
\begin{equation}
E[1_{(\tau \le T)}\Phi(|X_\tau|^2)]\le \Phi(|X_0|^2)e^{C_2T}.\label{estimation.Phi2}
\end{equation}
Since $\Phi(|X_\tau|^2)=\Phi(\infty)=+\infty$ on a set with positive measure, the left hand side of (\ref{estimation.Phi2}) should be infinity which is a contradiction. Therefore we have
\[\mathcal P(\tau <\infty)=0.\]
\end{proof}
\begin{remark}
In addition to the assumptions of Theorem \ref{thm.nonexplosion}, if we further assume that
\[2\left \langle x,f(x)\right \rangle+\left \|g(x)\right \|^2+\int_{\mathcal U}|h(x,u)|^2\mu(du) \le0,\]
for all $x \in \mathbb R^m$. Then the analysis in the proof of Theorem \ref{thm.nonexplosion} indicates that
\[E[\Phi(|X_t|^2)] \le \Phi(|x|^2), \text{ for all $t>0$.}\]
Since $\lim_{|\xi| \to +\infty}\Phi(\xi)=+\infty$, it implies that the family of probability measures $P(t,x,dy):=\mathcal P(X_t(x) \in dy)$ is tight. By the theory of Krylov and Bogolubov (see Theorem 4.17 in \cite{Hairer_2006}), if the SDE (\ref{SDE}) also defines a Feller semigroup, then there exists an invariant measure for this Markov process.
\end{remark}
Let $X_t(x)$ be the solution of (\ref{SDE}) with initial value $x$.  By Theorem 4.1 in \cite{Fang_Zhang_2005}, we know that when there is no random jumps, i.e. $h=0$, one will have that
\begin{equation}
\lim_{|x|\to \infty}|X_t(x)|=\infty \text{ in probability.}\label{nocomeback}
\end{equation}
That is if the initial point of the process is far from the origin, then, for any compact set, the process will not enter into it in a short time. But this is not true when the SDE is also driven by a Poisson random measure. An counter example has been given in the previous section. To ensure that  (\ref{nocomeback}) holds, some additional assumptions must be added.
\begin{assumption}\label{assumption.nocomeback}
There exist $C>0$ such that
\[|h(x,u)|^2 \le C,\]
and $\mu(\mathcal U) < \infty$.
\end{assumption}
The meaning of the assumption is clear. It requires that the size of the jump is uniformly bounded and there will not be too many jumps for each path. We have the following result:
\begin{theorem}
In addition to the assumptions of Theorem \ref{thm.nonexplosion}, we also require that Assumption \ref{assumption.nocomeback} is satisfied. Then (\ref{nocomeback}) holds.
\end{theorem}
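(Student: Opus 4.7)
The plan is to prove (\ref{nocomeback}) by exhibiting a bounded, strictly decreasing $C^2$ function $\tilde\Phi : [0,\infty) \to (0,1]$ with $\tilde\Phi(\xi) \to 0$ as $\xi \to \infty$, and establishing a Gronwall-type estimate $E[\tilde\Phi(|X_t(x)|^2)] \le \tilde\Phi(|x|^2)\, e^{Ct}$. Since $\tilde\Phi$ is decreasing, Markov's inequality then gives
\[
P(|X_t(x)| \le R) \le \frac{E[\tilde\Phi(|X_t(x)|^2)]}{\tilde\Phi(R^2)} \le \frac{\tilde\Phi(|x|^2)\,e^{Ct}}{\tilde\Phi(R^2)} \xrightarrow{|x|\to\infty} 0,
\]
which is precisely (\ref{nocomeback}). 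The natural candidate, recycling the notation of Theorem \ref{thm.nonexplosion}, is $\tilde\Phi(\xi) := 1/(1+\Psi(\xi))$ with $\Psi(\xi) = \int_0^\xi ds/(s\rho(s)+1)$; property (4) of Assumption \ref{assumption.nonexplosion} guarantees $\Psi(\xi) \to \infty$, and differentiation gives $\tilde\Phi' < 0$ together with $\tilde\Phi'' > 0$.

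Apply It\^o's formula to $\tilde\Phi(|X_{t\wedge\tau_R}|^2)$ exactly as in the proof of Theorem \ref{thm.nonexplosion} and take expectation so that the martingale terms vanish. The drift decomposes into three contributions:
\[
\tilde\Phi'(|X|^2)\Bigl\{2\langle X,f\rangle+\|g\|^2+\int_{\mathcal U}|h|^2\mu\Bigr\},\qquad 2\tilde\Phi''(|X|^2)\,|\langle X,g\rangle|^2,\qquad \int_{\mathcal U}D_{|X+h|^2}\tilde\Phi(|X|^2)\mu(du).
\]
The first is bounded in absolute value by $|\tilde\Phi'(|X|^2)|\cdot C(|X|^2\rho(|X|^2)+1) = C(1+\Psi(|X|^2))^{-2} \le C\tilde\Phi(|X|^2)$, by Assumption \ref{assumption.nonexplosion} and the explicit form of $\tilde\Phi'$. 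The second, using $|\langle X, g\rangle|^2 \le |X|^2\|g\|^2 \le C|X|^2(|X|^2\rho(|X|^2)+1)$, reduces to the uniform growth lemma
\[
\tilde\Phi''(\xi)\,\xi\,(\xi\rho(\xi)+1) \le K\,\tilde\Phi(\xi), \qquad \xi \ge 0,
\]
which follows from the large-$\xi$ asymptotics $\tilde\Phi''(\xi) \sim 1/(\xi^2\rho(\xi)\Psi(\xi)^2)$ and $\tilde\Phi(\xi) \sim 1/\Psi(\xi)$ (invoking property (3) of Assumption \ref{assumption.nonexplosion} to absorb $\xi\rho'$) together with continuity and positivity of $\tilde\Phi$ and $\tilde\Phi''$ on compact subsets of $[0,\infty)$.

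The third piece is where Assumption \ref{assumption.nocomeback} is essential: the Taylor remainder $D_y\tilde\Phi(x) = \tfrac12 \tilde\Phi''(\xi^*)(y-x)^2$ with $y - x = 2\langle X,h\rangle + |h|^2$ satisfies $(y-x)^2 \le C(|X|^2+1)$, and integration against $\mu$ is legitimate because $\mu(\mathcal U) < \infty$. Splitting into the regimes $|X| \le 2\sqrt C$ (where compactness gives boundedness of $\tilde\Phi''$ and a positive lower bound on $\tilde\Phi$) and $|X| > 2\sqrt C$ (where $\xi^* \asymp |X|^2$ and the asymptotic analysis of $\tilde\Phi''$ applies) yields once more an upper bound of the form $C\tilde\Phi(|X|^2)$. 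Collecting the three contributions gives $E[\tilde\Phi(|X_{t\wedge\tau_R}|^2)] \le \tilde\Phi(|x|^2) + C\int_0^t E[\tilde\Phi(|X_{s\wedge\tau_R}|^2)]\,ds$, and Gronwall's lemma combined with Fatou and no-explosion (Theorem \ref{thm.nonexplosion}) yields the desired global estimate after $R \to \infty$. The main technical obstacle will be the uniform growth estimate on $\tilde\Phi''$ and the analogous uniform control of the jump Taylor remainder across both regimes; note that without Assumption \ref{assumption.nocomeback} this approach breaks down entirely, consistent with the counterexample (\ref{SDE.jump}) displayed in the introduction.
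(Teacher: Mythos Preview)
Your proposal is correct and proves exactly (\ref{nocomeback}), but it diverges from the paper's argument in three respects. First, the paper takes the auxiliary function $\Phi(\xi)=\exp(-\Psi(\xi))$ rather than your $\tilde\Phi(\xi)=(1+\Psi(\xi))^{-1}$; the exponential choice makes the jump term essentially trivial, since the multiplicative identity $\Phi(|X+h|^2)=\Phi(|X|^2)\exp\bigl(\int_{|X+h|^2}^{|X|^2}\frac{ds}{s\rho(s)+1}\bigr)$ reduces the whole estimate to bounding the ratio $(|X|^2+1)/(|X+h|^2+1)$, which is two-sided bounded under Assumption \ref{assumption.nocomeback}. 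This replaces your Taylor-remainder-plus-regime-splitting analysis of $\tilde\Phi''(\xi^*)$ by a one-line computation. Second, instead of applying Markov's inequality at a fixed time $t$, the paper introduces the additional hitting time $\tilde\tau_R=\inf\{t:|X_t(x_0)|\le R\}$ and runs Gronwall on $[0,t\wedge\tilde\tau_R\wedge\tau_M]$; since $\Phi$ is decreasing this yields $\Phi(R^2)\,\mathcal P(\tilde\tau_R\le t)\le\Phi(|x_0|^2)e^{Ct}$, hence the stronger conclusion $\mathcal P\bigl(\inf_{0\le s<t}|X_s(x_0)|\le R\bigr)\to 0$ as $|x_0|\to\infty$, which of course implies (\ref{nocomeback}). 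Your route is a little more elementary in that it avoids the extra stopping time, but the price is the ad hoc small/large $|X|$ split for the jump remainder, and you obtain only the pointwise-in-$t$ statement.
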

\begin{proof}
Let $\psi$ be defined as in the proof of Theorem \ref{thm.nonexplosion}. Define the function $\Phi$ by
\[\Phi(\xi)=\exp(-\psi(\xi)).\]
In this case, one gets that $\Phi'(\xi)=-\frac{\Phi(\xi)}{\xi\rho(\xi)+1}$ and for some $C_1>0$,
\begin{equation}\label{Phi.2nd.derivative}
\Phi''(\xi)=\frac{\Phi(\xi)(1+\rho(\xi)+\xi\rho'(\xi))}{(\xi\rho(\xi)+1)^2}\le C_1\frac{\Phi(\xi)\rho(\xi)}{(\xi\rho(\xi)+1)^2}.
\end{equation}
Let $R$ and $M$ be two constants such that $M > |x_0|>R$. Define
\[\tilde \tau_R:=\inf \{t>0,|X_t(x_0)| \le R\},\]
and
\[ \tau_M:=\inf \{t>0,|X_t(x_0)| \ge M\}.\]
As in the proof of Theorem \ref{thm.nonexplosion}, we have
\[E[\Phi(|X_{t\wedge\tilde \tau_R\wedge\tau_M}|^2)]=\Phi(|x_0|^2)+I_1(t)+I_2(t),\]
where
\begin{equation*}
\begin{aligned}
I_1(t)=E[\int_0^{t\wedge\tilde \tau_R\wedge\tau_M}&\bigg\{\Phi'(|X_{s-}|^2)\{2\left \langle X_{s-},f(X_{s-})\right \rangle+\left \|g(X_{s-})\right \|^2\\
&+\int_{\mathcal U}|h(X_{s-},u)|^2\mu(du)\}+2\Phi''(|X_{s-}|^2)|\left\langle X_{s-},g(X_{s-})\right \rangle|^2\bigg \}ds],
\end{aligned}
\end{equation*}
and
\[I_2(t)=E[\int_0^{t\wedge\tilde \tau_R\wedge\tau_M}\Phi(|X_{s-}+h(X_{s-},u)|^2)-\Phi(|X_{s-}|^2)-\Phi'(|X_{s-}|^2)(2\left\langle X_{s-},h\right\rangle+|h|^2)\mu(du)ds].\]
By Assumption \ref{assumption.nonexplosion} and (\ref{Phi.2nd.derivative}), we see that
\begin{equation*}
\begin{aligned}
\Phi''(|X_{t-}|^2)|\left\langle X_{t-},g(X_{t-})\right \rangle|^2 &\le
C\frac{\Phi(|X_{t-}|^2)\rho(|X_{t-}|^2)}{(|X_{t-}|^2\rho(|X_{t-}|^2)+1)^2}|X_{t-}|^2(|X_{t-}|^2\rho(|X_{t-}|^2)+1)\\
&\le C\Phi(|X_{t-}|^2).
\end{aligned}
\end{equation*}
With a similar estimation as in the proof of Theorem \ref{thm.nonexplosion}, we have
\[I_1 \le CE[\int_0^{t\wedge\tilde \tau_R\wedge\tau_M}\Phi(|X_s|^2)ds].\]
Now we estimate $I_2$. By Assumption \ref{assumption.nocomeback}, it is easy to see that
\[|2\left\langle X_{t-},h\right\rangle+|h|^2| \le C(|X_{t-}|^2\rho(|X_{t-}|^2)+1)\]
and
\[\frac{1}{C}\le\frac{|X_{t-}|^2+1}{|X_{t-}+h(X_{t-},u)|^2+1}\le C\]
Let
\[(*):=\Phi(|X_{t-}+h(X_{t-},u)|^2)-\Phi(|X_{t-}|^2)-\Phi'(|X_{t-}|^2)(2\left\langle X_{t-},h\right\rangle+|h|^2).\]
We see that
\[|\Phi'(|X_{t-}|^2)||2\left\langle X_{t-},h\right\rangle+|h|^2| \le  C\Phi(|X_{t-}|^2),\]
and
\[\Phi(|X_{t-}+h|^2)=\Phi(|X_{t-}|^2)\exp(\int_{|X_{t-}+h|^2}^{|X_{t-}|^2}\frac{1}{s\rho(s)+1}ds).\]
If $|X_{t-}+h| \ge |X_{t-}|$, then $\Phi(|X_{t-}+h|^2)\le\Phi(|X_{t-}|^2)$. In the other case, we have
\[\exp(\int_{|X_{t-}+h|^2}^{|X_{t-}|^2}\frac{1}{s\rho(s)+1}ds) \le \exp(\int_{|X_{t-}+h|^2}^{|X_{t-}|^2}\frac{1}{s+1}ds)=\frac{|X_{t-}|^2+1}{|X_{t-}+h|^2+1}\le C.\]
Hence we get
\[(*) \le C\Phi(|X_{t-}|^2).\]
It implies that
\[E[\Phi(|X_{t\wedge\tilde \tau_R\wedge\tau_M}|^2)]=\Phi(|x_0|^2)+CE[\int_0^{t\wedge\tilde \tau_R\wedge\tau_M}\Phi(|X_s|^2)ds]. \]
By Gronwall lemma, we get
\[E[\Phi(|X_{t\wedge\tilde \tau_R\wedge\tau_M}|^2)]\le\Phi(|x_0|^2)e^{CT}.\]
Letting $M \to \infty$, by Fatou lemma, one has
\[E[\Phi(|X_{t\wedge\tilde \tau_R}|^2)]\le\Phi(|x_0|^2)e^{CT}.\]
Since $\Phi$ is decreasing, this gives that
\[\mathcal P(\tilde \tau_R \le t)\Phi(R^2) \le E[\Phi(|X_{t\wedge\tilde \tau_R}|^2)]\le\Phi(|x_0|^2)e^{CT}\]
Therefore
\[\mathcal P(\inf_{0\le s < t}|X_s(x_0)|\le R)\le e^{Ct}\exp(-\int_{R^2}^{|x_0|^2}\frac{1}{s\rho(s)+1}ds)\]
which tends to zero as $|x_0| \to \infty$. Thus we prove that (\ref{nocomeback}) holds.
\end{proof}
\section{Criteria for pathwise uniqueness and comparison principle}
Inspired from the paper \cite{Bahlali_Hakassou_Ouknine_2015} of Bahlali et al, we consider the pathwise uniqueness under the condition below.
\begin{assumption}\label{assumption.localLip}
There exists $C>0$, $\sigma>0$ and $K \ge e$ such that, for any integer $N>K$,
\begin{equation*}
\begin{aligned}
&|f(x)-f(y)|\le C\log N|x-y|+C\frac{\log N}{N^{\sigma}},\\
&\left \|g(x)-g(y)\right \|^2 \le C\log N|x-y|^2+C\frac{\log N}{N^{2\sigma}},\\
\end{aligned}
\end{equation*}
for all $x,y$ that $|x|,|y|\le N$.\\
For $h$, there exists $p\ge1$ and a constant $C(p)$ depending on $p$ such that
\begin{equation}\label{assumption.h.p-power}
\int_{\mathcal U}|h(x,u)-h(y,u)|^{2p}\mu(du) \le C(p)\log N|x-y|^{2p}+C(p)\frac{\log N}{N^{2\sigma p}},\\
\end{equation}
for all $x,y$ that $|x|,|y|\le N$.
\end{assumption}
Typical example for Assumption \ref{assumption.localLip} is
\[(f,g,h)=(x\log |x|,x\sqrt{|\log |x||},x\sqrt{|\log |x||}u),\]
in the one-dimensional case for $p=2$ if one has that $\int_{\mathcal U}|u|^2\mu(du) <+\infty$. We only check it for $f(x)=x\log|x|$ since it is similar to check for the other two and can be found in \cite{Bahlali_Hakassou_Ouknine_2015}. Thanks to triangular inequalities, it suffices to treat separately two cases: $0 \le |x|,|y|\le\frac{1}{N}$ and $\frac{1}{N}\le|x|,|y|\le N$. In the first case, we see that $|f(x)-f(y)|\le|f(x)|+|f(y)|\le2\frac{\log N}{N}$. While in the other case, one gets that $|f(x)-f(y)| \le (1+\log N)|x-y|$. Combining these two, we see that $f(x)=x \log|x|$ satisfies the assumption. In the next section, we need to assume that (\ref{assumption.h.p-power}) holds for all $p\ge1$. If we know that $\int_{\mathcal U}|u|^{2p}\mu(du) <+\infty$ for any $p \ge1$, then one example that satisfying the assumption is $h(x,u)=\tilde h(x)u$ with $\tilde h$ defined as
\[\tilde h(x)=x(1-exp(-(x-1)^2))\log|\log|x||.\]
One can check that $|\tilde h(x)-\tilde h(y)| \le C \log \log N|x-y|+C\frac{\log N}{N}$ for $N$ sufficiently large. Thus one has
\begin{equation*}
\begin{aligned}
|\tilde h(x)-\tilde h(y)|^p\le& C(\log \log N)^p|x-y|^p+C\frac{(\log N)^p}{N^p}\\
\le&C(p)\log N|x-y|^p+C(p)\frac{\log N}{N^{\frac{p}{2}}}.
\end{aligned}
\end{equation*}
The following lemma is needed for the proof of other theorems in this paper. It shows that, in some sense, the assumption is stable under localization.
\begin{lemma}\label{lemma.cutoff}
For $R>0$, consider a smooth function $\phi_R:\mathbb R^m \to \mathbb R$ satisfying
\[0\le \phi_R\le1, \phi_R(x)=1 \text{ for } |x|\le R+1, \phi_R(x)=0 \text{ for } |x|\ge R+3,\]
and $|\phi'_R(x)|\le1$. If the coefficients $(f,g,h)$ satisfy the Assumption \ref{assumption.localLip}, then the coefficients $(\phi_Rf,\phi_Rg,\phi_Rh)$ also satisfy  the Assumption \ref{assumption.localLip}.
\end{lemma}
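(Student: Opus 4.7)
The plan is to apply the product-rule decomposition to each of the three estimates in Assumption \ref{assumption.localLip}. For $f$, write
\[
\phi_R(x)f(x)-\phi_R(y)f(y) = \phi_R(x)\bigl[f(x)-f(y)\bigr] + \bigl[\phi_R(x)-\phi_R(y)\bigr]f(y),
\]
and analogously for $g$ and for $h(\cdot,u)$ pointwise in $u$. The first summand is bounded in modulus by $|f(x)-f(y)|$ using $0\le\phi_R\le 1$, so Assumption \ref{assumption.localLip} controls it immediately with the same constants. For the second summand, the properties $|\phi_R(x)-\phi_R(y)|\le|x-y|$ (from $|\phi_R'|\le 1$) and $\phi_R\equiv 0$ outside $B(0,R+3)$ reduce the task to a uniform bound for $|f|$ on $B(0,R+3)$, and similarly for $\|g\|^2$ and $\int_{\mathcal U}|h|^{2p}\mu(du)$.

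These uniform bounds on the support of $\phi_R$ come for free from Assumption \ref{assumption.localLip} by specialising $y=0$ and fixing an integer $N\ge R+3$: one obtains $|f(x)|\le |f(0)|+C\log N\cdot|x|+C\log N/N^\sigma\le M_R$ for all $|x|\le R+3$, and in the same way $\|g(x)\|^2\le M_R$ and $\int_{\mathcal U}|h(x,u)|^{2p}\mu(du)\le M_R(p)$. The last of these uses $|a+b|^{2p}\le 2^{2p-1}(|a|^{2p}+|b|^{2p})$ together with the implicit finiteness of $\int_{\mathcal U}|h(0,u)|^{2p}\mu(du)$ (otherwise the $2p$-estimate at $y=0$ is vacuous and the lemma is empty for $h$).

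Assembling these ingredients yields inequalities of the exact form required by Assumption \ref{assumption.localLip} for the triple $(\phi_R f,\phi_R g,\phi_R h)$, with $C$ (resp.\ $C(p)$) replaced by constants depending on $R$ but with the same $\log N$ and $N^{-\sigma}$ structure. The only mild bookkeeping issue is the case $|y|>R+3$ with $|x|\le R+3$: here the convenient decomposition is instead $\phi_R(x)f(x)-\phi_R(y)f(y)=[\phi_R(x)-\phi_R(y)]f(x)$, which places the evaluation of $f$ inside $B(0,R+3)$ where the uniform bound applies; the complementary case $|x|,|y|>R+3$ is trivial because both sides vanish. I do not anticipate a genuine obstacle; the argument is essentially the product rule combined with the support and Lipschitz properties of $\phi_R$, and the heaviest bookkeeping is in the $L^{2p}$-estimate for $\phi_R h$, where one applies $|a+b|^{2p}\le 2^{2p-1}(|a|^{2p}+|b|^{2p})$ before integrating in $u$.
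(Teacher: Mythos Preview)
Your proposal is correct and follows essentially the same route as the paper: the product-rule decomposition together with $|\phi_R(x)-\phi_R(y)|\le|x-y|$ and a uniform bound for the coefficients on the support of $\phi_R$. The only cosmetic difference is in the bookkeeping of the mixed cases: the paper inserts intermediate points $z_1,z_2$ on the segment from $x$ to $y$ with $|z_1|=R+1$, $|z_2|=R+3$ and reduces to the pure cases via the triangle inequality, whereas you simply swap the factorization when $|y|>R+3$; both arguments produce the required bound $C'\log N\,|x-y|+C'\log N/N^{\sigma}$ with $C'$ depending on $R$.
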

\begin{proof}
We only check that $\phi_Rf$ satisfies the assumption, since the proof for the other two is similar. Set $M:=\sup_{R+1\le|x|\le R+3}|b(x)|$. If $0\le |x|,|y|\le R+1$ or $|x|,|y|\ge R+3$, it is obvious that
\[|\phi_R(x)f(x)-\phi_R(y)f(y)|\le C\log N|x-y|+C\frac{\log N}{N^{\sigma}}.\]
If $R+1\le |x|,|y|\le R+3$, then
\begin{equation*}
\begin{aligned}
|\phi_R(x)f(x)-\phi_R(y)f(y)|\le &|\phi_R(x)f(x)-\phi_R(x)f(y)|+|\phi_R(x)f(y)-\phi_R(y)f(y)|\\
\le&|f(x)-f(y)|+M|\phi_R(x)-\phi_R(y)|\\
\le&C\log N |x-y|+C\frac{\log N}{N^{\sigma}}+M|x-y|\\
\le&(C+M)\log N |x-y|+C\frac{\log N}{N^{\sigma}}.
\end{aligned}
\end{equation*}
In the general case, for example, if $0\le |x|\le R+1$ and $|y|\ge R+3$, then one can find $z_1$ and $z_2$ such that $|z_1|=R+1$, $|z_2|=R+3$ and
\[|x-y|=|x-z_1|+|z_1-z_2|+|z_2-y|.\]
Then
\begin{equation*}
\begin{aligned}
|f(x)-f(y)| \le& |f(x)-f(z_1)|+|f(z_1)-f(z_2)|\\
\le&C\log N |x-z_1|+(C+M)\log N|z_1-z_2|+2C\frac{\log N}{N^{\sigma}}\\
\le&(C+M)\log N|x-y|+2C\frac{\log N}{N^{\sigma}}.
\end{aligned}
\end{equation*}
Thus we see that $\phi_Rf$ satisfies the assumption.
\end{proof}
Now we prove that the pathwise uniqueness holds for SDE (\ref{SDE}).
\begin{theorem}\label{thm.localLip}
If Assumption \ref{assumption.localLip} holds with $p=2$, then pathwise uniqueness  holds for the SDE (\ref{SDE}).
\end{theorem}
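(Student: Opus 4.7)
The plan is a localised Yamada--Watanabe-style argument in which the truncation radius $N$ appearing in Assumption 3.1 plays the role of the Yamada--Watanabe regularising parameter. Let $X$ and $Y$ be two solutions of (1.1) with common initial value $x_0$, put $Z_t := X_t - Y_t$, and introduce the stopping times $\tau_N := \inf\{t>0 : |X_t|\vee|Y_t|\ge N\}$. Assumption 3.1 is easily seen to imply Assumption 2.1 with $\rho(s) = 1+\log(s\vee e)$, so Theorem 2.1 shows that both solutions are non-exploding and $\tau_N \uparrow +\infty$ almost surely; it therefore suffices to prove $Z_{t\wedge\tau_N}\equiv 0$ for each fixed $N$.

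The core step is to apply It\^o's formula to $|Z_{t\wedge\tau_N}|^{2p}$ with $p = 2$ and estimate each resulting term against the $\log N$-Lipschitz structure of Assumption 3.1. Writing $\Delta h(s,u) := h(X_{s-},u) - h(Y_{s-},u)$, the non-martingale part of the decomposition consists of the drift contribution $2p|Z|^{2p-2}Z\cdot(f(X)-f(Y))$, the Brownian quadratic piece $p|Z|^{2p-2}\|g(X)-g(Y)\|^2+2p(p-1)|Z|^{2p-4}|(g(X)-g(Y))^T Z|^2$, and the jump compensator
\[
\int_{\mathcal U}\bigl[|Z_{s-}+\Delta h|^{2p} - |Z_{s-}|^{2p} - 2p|Z_{s-}|^{2p-2}Z_{s-}\cdot\Delta h\bigr]\mu(du).
\]
A second-order Taylor expansion of $z\mapsto |z|^{2p}$ bounds the jump integrand pointwise by $C(p)(|Z|^{2p-2}|\Delta h|^2+|\Delta h|^{2p})$. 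Combining this with Assumption 3.1 applied on $\{s<\tau_N\}$, and absorbing the mixed terms of the form $|Z|^{2p-1}/N^{\sigma}$ and $|Z|^{2p-2}/N^{2\sigma}$ via Young's inequality, every contribution is dominated by $C\log N\cdot|Z_{s-}|^{2p} + C\log N\cdot N^{-2p\sigma}$.

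Taking expectations (the stochastic integral pieces are genuine martingales because of the localisation) yields, for $p=2$,
\[
E\bigl[|Z_{t\wedge\tau_N}|^{4}\bigr] \le C\log N\int_0^t E\bigl[|Z_{s\wedge\tau_N}|^{4}\bigr]\,ds + CT\log N\cdot N^{-4\sigma},\qquad t\in[0,T].
\]
Gronwall's lemma then gives $E[|Z_{T\wedge\tau_N}|^{4}]\le CT\log N\cdot N^{CT-4\sigma}$, which tends to $0$ as $N\to+\infty$ provided $CT<4\sigma$. Since $\tau_N\uparrow+\infty$, this forces $Z\equiv 0$ on $[0,T_0]$ for some $T_0 = T_0(C,\sigma)>0$. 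A finite iteration, restarting from $X_{T_0}=Y_{T_0}$, extends the equality to any finite horizon and establishes pathwise uniqueness on $[0,\infty)$.

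The principal obstacle is the $|Z|^{2p-2}\int_{\mathcal U}|\Delta h|^{2}\mu(du)$ cross term from the jump compensator: its natural control is an $L^{2}(\mu)$-norm bound on $\Delta h$, whereas Assumption 3.1 with $p = 2$ only supplies an $L^{2p}(\mu)$-norm bound, and because $\mu$ is only $\sigma$-finite one cannot interpolate by a bare H\"older step. The exponent $p = 2$ in the hypothesis is chosen precisely so that this cross term can be re-expressed, via the pointwise Young inequality $|Z|^{2p-2}|\Delta h|^{2}\le \tfrac{p-1}{p}|Z|^{2p}+\tfrac{1}{p}|\Delta h|^{2p}$ combined with the local $L^{2}$-integrability of $h$ provided by Assumption 2.1, as a combination of $|Z|^{2p}$ and $|\Delta h|^{2p}$ alone. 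This is what keeps the Gronwall constant at the critical scale $\log N$ and makes the factor $N^{CT}$ beatable by the decaying $N^{-4\sigma}$ coming from the hypothesis.
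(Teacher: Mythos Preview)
Your overall architecture --- localise by $\tau_N$, apply It\^o's formula, derive a Gronwall inequality with coefficient $C\log N$ and inhomogeneity of order $\log N\cdot N^{-c\sigma}$, then iterate over time intervals of fixed length --- is precisely the paper's. The paper, however, applies It\^o to $|Z_t|^{2}$ rather than $|Z_t|^{4}$: for the square the jump compensator is exactly $\int_{\mathcal U}|\Delta h|^{2}\,\mu(du)$, with no Taylor remainder and hence no cross term, and the estimate follows at once from the $p=1$ instance of (\ref{assumption.h.p-power}) (which is in fact what the paper invokes in its displayed Gronwall bound, the ``$p=2$'' in the statement notwithstanding). Passing to $|Z|^{4}$ manufactures the very obstacle you describe without any compensating gain.

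That obstacle is not resolved by your proposed fix. After the Taylor step the problematic contribution is $|Z_{s-}|^{2}\int_{\mathcal U}|\Delta h(s-,u)|^{2}\,\mu(du)$. Applying Young's inequality \emph{pointwise in $u$}, as you write it, yields
\[
\int_{\mathcal U}|Z|^{2}|\Delta h(u)|^{2}\,\mu(du)\;\le\;\tfrac12\,|Z|^{4}\,\mu(\mathcal U)+\tfrac12\int_{\mathcal U}|\Delta h(u)|^{4}\,\mu(du),
\]
and since $\mu$ is only $\sigma$-finite the first term on the right is in general infinite. Invoking Assumption~\ref{assumption.nonexplosion} does not save this: it gives at best $\int_{\mathcal U}|\Delta h|^{2}\,\mu(du)\le C(N^{2}\rho(N^{2})+1)\sim N^{2}\log N$ on $\{s<\tau_N\}$, which cannot be absorbed into a Gronwall inequality whose coefficient must remain of order $\log N$. (Relatedly, your claim that Assumption~\ref{assumption.localLip} with $p=2$ implies Assumption~\ref{assumption.nonexplosion} already fails for the $h$-clause when $\mu(\mathcal U)=\infty$, since an $L^{4}(\mu)$ bound on differences gives no $L^{2}(\mu)$ control on $h$ itself; the paper simply \emph{assumes} the solutions are conservative rather than deducing it.) The clean route is the paper's: work with $|Z|^{2}$ so that the only jump term is $\int_{\mathcal U}|\Delta h|^{2}\mu(du)$, and bound it directly via the $p=1$ version of the hypothesis on $h$.
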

\begin{proof}
Let $X_t$ and $Y_t$ be two solutions of (\ref{SDE}) with the same initial value. For $N>0$, we define the  stopping time
\[\tau_N:=\inf\{t>0,|X_t| \text{ or }|Y_t| \ge N\}.\]
Since the solutions are assumed to be conservative, then $\tau_N$ tends to $+\infty$ as $N \to +\infty$.\\
Applying It\^o-formula to $|X_t-Y_t|^2$, we get
\begin{equation*}
\begin{aligned}
d|X_t-Y_t|^2=&2\left \langle X_{t-}-Y_{t-},f(X_{t-})-f(Y_{t-})\right \rangle dt+\left \|g (X_{t-})-g(Y_{t-})\right\|^2dt\\
&+\int_{\mathcal U}|h(X_{t-},u)-h(Y_{t-},u)|^2\mu(du)dt\\&
+2\left \langle X_{t-}-Y_{t-},g(X_{t-})-g(Y_{t-})\right \rangle dW_t\\
&+\int_{\mathcal U}|X_{t-}-Y_{t-}+(h(X_{t-},u)-h(Y_{t-},u))|^2-|X_{t-}-Y_{t-}|^2\tilde N(du,dt).
\end{aligned}
\end{equation*}
Taking expectation, we get that
\begin{equation*}
\begin{aligned}
E[|X_{t \wedge \tau_N}-Y_{t \wedge \tau_N}|^2]\le &E[\int_0^t 2|f(X_{(s\wedge \tau_N)-})-f(Y_{(s\wedge \tau_N)-})||X_{(s\wedge \tau_N)-}-Y_{(s\wedge \tau_N)-}|ds]\\
&+E[\int_0^t \left \|g(X_{(s\wedge \tau_N)-})-g(Y_{(s\wedge \tau_N)-}) \right \|^2ds]\\
&+E[\int_0^t \int_{\mathcal U}|h(X_{(s\wedge \tau_N)-},u)-h(Y_{(s\wedge \tau_N)-},u)|^2\mu(du)ds].
\end{aligned}
\end{equation*}
By H\"older inequality and Assumption \ref{assumption.localLip}, we have
\begin{equation*}
\begin{aligned}
E[|X_{t \wedge \tau_N}-Y_{t \wedge \tau_N}|^2]\le (1+C\log N)E[\int_0^t |X_{s \wedge \tau_N}-Y_{s \wedge \tau_N}|^2ds]+CT\frac{\log N}{N^{2\sigma}}.
\end{aligned}
\end{equation*}
Thanks to Gronwall lemma, we get
\[E[|X_{t \wedge \tau_N}-Y_{t \wedge \tau_N}|^2]\le CT\frac{\log N}{N^{2\sigma-CT}}.\]
If $T \le \frac{\sigma}{C}$, then, letting $N \to +\infty$, it holds that
\[E[|X_{t}-Y_{t}|^2]\le 0,\]
which implies that, given $t>0$, $X_t=Y_t$ almost surely. Thus almost surely, we have $X_t=Y_t$, for all $t \in Q\cap [0,T]$. Since the path is c\`adl\`ag, we have that $X$ and $Y$ are indistinguishable before $\frac{\sigma}{C}$.\\
Starting again from $\frac{\sigma}{C}$ and applying the same argument as above, we get that almost surely, $X_t=Y_t$ for $t \in [\frac{\sigma}{C},\frac{2\sigma}{C}]$. Repeating this procedure, we finally get that for every $T >0$, the two solutions are indistinguishable.
\end{proof}
Next we consider the comparison principle for (\ref{SDE}). As the example given in previous, we see that it will not hold in general because of the appearance of the random jump. But  some additional assumptions, especially the non-decreasing of $h$, will imply the comparison principle in the one-dimensional case.
\begin{assumption}\label{assumption.comparison}
Let $C>0$ and $\sigma>0$. For any integer $M > e$, one can find a non-decreasing continuous function $r_M$ such that
\begin{equation*}
\begin{aligned}
&|f(x)-f(y)| \le C\log M|x-y|+C\frac{\log M}{M^{\sigma}},\\
&(g(x)-g(y))^2 \le r_M^2(|x-y|),\\
&\int_{\mathcal U}(h(x,u)-h(y,u))^2\mu(du) \le r_M^2(|x-y|),
\end{aligned}
\end{equation*}
for all $|x|,|y| \le M$.The function $r_M$ should also  satisfy $\int_0^1 \frac{du}{r^2_M(u)}=+\infty$.\\
 Moreover the function $h$ is nondecreasing, i.e.
\[h(x,u) \le h(y,u), \text{ for all }x\le y,u\in \mathcal U.\]
\end{assumption}
\begin{theorem}\label{thm.comparison}
In the one-dimensional case, let $(f_1,g,h)$ and $(f_2,g,h)$ be two sets of coefficients that satisfy
Assumption \ref{assumption.comparison}. Furthermore, we have
 \[f^1(x) \le f^2(x), \text{ for all } x \in \mathbb R.\]
 Let $X^1$ and $X^2$ be the associated solutions for (\ref{SDE}). If $X^1_0 \le X^2_0$, then almost surely
\[X^1_t \le X^2_t, \text{ for all } t\in[0,T].\]
\end{theorem}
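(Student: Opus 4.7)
The plan is a Yamada--Watanabe comparison argument applied to $Z_t := X^1_t - X^2_t$, adapted to handle the Poisson jumps. Fix $M > e$ and, using the hypothesis $\int_0^1 r_M^{-2}(u)\,du = +\infty$, construct the usual Yamada--Watanabe sequence: choose $a_k\downarrow 0$ and non-negative smooth bumps $\psi_k$ supported in $(a_k, a_{k-1})$ with $\int\psi_k = 1$ and $0 \le \psi_k(u) \le 2/(k r_M^2(u))$, and set $\phi_k(z) := \int_0^z\!\int_0^y\psi_k(u)\,du\,dy$ for $z>0$ and $\phi_k(z):=0$ for $z\le 0$. Then $\phi_k\in C^2$, is convex and non-decreasing, $0\le\phi_k'\le 1$, $z^+ - a_{k-1} \le \phi_k(z)\le z^+$, and $\phi_k''(z)=\psi_k(z)$. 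Localize with $\tau_M := \inf\{t>0 : |X^1_t|\vee|X^2_t|\ge M\}$, which tends to $+\infty$ a.s.\ since the solutions are globally defined. The goal is to control $E[\phi_k(Z_{t\wedge\tau_M})]$, then pass $k\to\infty$ and $M\to\infty$.

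Apply It\^o's formula to $\phi_k(Z_{t\wedge\tau_M})$ and take expectation, which kills the Brownian and compensated-Poisson martingale parts thanks to the localization. For the drift, split $f_1(X^1_{s-}) - f_2(X^2_{s-}) = [f_1(X^1_{s-})-f_1(X^2_{s-})] + [f_1(X^2_{s-})-f_2(X^2_{s-})]$: the second summand is $\le 0$ by hypothesis and is killed after multiplication by $\phi_k'(Z_{s-})\ge 0$, while the first is bounded on $\{Z_{s-}>0\}$ (the only support of $\phi_k'$) by $C\log M\cdot Z_{s-} + C(\log M)/M^\sigma$. The diffusion correction gives
\[\tfrac12\phi_k''(Z_{s-})(g(X^1_{s-})-g(X^2_{s-}))^2 \le \tfrac12\cdot\tfrac{2}{k r_M^2(Z_{s-})}\cdot r_M^2(Z_{s-}) = 1/k.\]
The jump compensator correction, by Taylor expansion, equals
\[\int_{\mathcal U}\!\int_0^1 (1-\theta)\phi_k''(Z_{s-}+\theta\Delta h)(\Delta h)^2\,d\theta\,\mu(du),\qquad \Delta h(u):=h(X^1_{s-},u)-h(X^2_{s-},u).\]
This is the decisive step: the monotonicity of $h$ forces $\operatorname{sgn}(\Delta h) = \operatorname{sgn}(Z_{s-})$. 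On $\{Z_{s-}\le 0\}$ one has $Z_{s-}+\theta\Delta h\le 0$, where $\phi_k$ and its derivatives vanish, so the integrand is $0$. On $\{Z_{s-}>0\}$ the intermediate points satisfy $Z_{s-}+\theta\Delta h\ge Z_{s-}$, so by monotonicity of $r_M$ and $\phi_k''r_M^2\le 2/k$, together with $\int(\Delta h)^2\mu(du)\le r_M^2(Z_{s-})$, the whole integral is again at most $1/k$.

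Assembling these bounds, using $Z_{s-}\phi_k'(Z_{s-}) \le \phi_k(Z_{s-}) + a_{k-1}$, and noting $\phi_k(Z_0)=0$ because $X^1_0\le X^2_0$, one arrives at
\[E[\phi_k(Z_{t\wedge\tau_M})] \le C\log M\int_0^t E[\phi_k(Z_{s\wedge\tau_M})]\,ds + T\Big(C\log M\cdot a_{k-1}+\tfrac{C\log M}{M^\sigma}+\tfrac{2}{k}\Big).\]
Gronwall produces an $M^{CT}$ factor; letting $k\to\infty$ (Fatou, plus $a_{k-1},1/k\to 0$) leaves $E[(Z_{t\wedge\tau_M})^+] \le CT\log M\cdot M^{CT-\sigma}$. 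For $T < \sigma/C$ this vanishes as $M\to\infty$, so (by Fatou and $\tau_M\to\infty$) $E[(Z_t)^+]=0$ on $[0,\sigma/C)$ and hence $X^1_t\le X^2_t$ on that interval by right-continuity. Iterating from a new origin $T_0 < \sigma/C$, exactly as in the proof of Theorem \ref{thm.localLip}, extends the conclusion to all of $[0,T]$.

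The main obstacle is the jump compensator term: the Yamada--Watanabe singular-modulus device only succeeds if the intermediate point $Z_{s-}+\theta\Delta h$ is pinned inside a region where the upper bound $\phi_k''\le 2/(kr_M^2)$ can be exploited against $(\Delta h)^2$. This is precisely what the monotonicity of $h$ (which aligns the sign of $\Delta h$ with that of $Z_{s-}$) and the monotonicity of $r_M$ (which then lets us replace $r_M^2(Z_{s-}+\theta\Delta h)$ by the smaller $r_M^2(Z_{s-})$) jointly provide; without either of them the jump contribution could not be absorbed into $1/k$, and the argument would fail, consistent with the counterexample (\ref{SDE.comparison}) given in the introduction.
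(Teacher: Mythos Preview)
Your proof is correct and follows essentially the same approach as the paper's: Yamada--Watanabe smooth approximants of $x^+$, localization by $\tau_M$, the splitting $f_1(X^1)-f_2(X^2)=(f_1(X^1)-f_1(X^2))+(f_1-f_2)(X^2)$ for the drift, the same Taylor-expansion estimate on the jump compensator using that $\operatorname{sgn}(\Delta h)=\operatorname{sgn}(Z_{s-})$ together with the monotonicity of $r_M$, followed by Gronwall and iteration in time steps of length $<\sigma/C$. The only cosmetic differences are that you bound the drift contribution via $Z_{s-}\phi_k'(Z_{s-})\le\phi_k(Z_{s-})+a_{k-1}$ and apply Gronwall before sending $k\to\infty$, whereas the paper bounds directly by $(Z_{s-})^+$ and passes $n\to\infty$ first; both orders yield the same final estimate $E[(Z_{t\wedge\tau_M})^+]\le CT\log M\cdot M^{CT-\sigma}$.
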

\begin{proof}
For $n \in \mathbb N$, let $\{a_n\}$ be the sequence defined by: $a_0=1>a_1>a_2>...>a_n>... \to 0$ and satisfies
\[\int_{a_n}^{a_{n-1}}\frac{du}{r^2_M(u)}=n.\]
For each $n$, let $\phi_n$ be  a non-negative continuous function such that its support is contained in $(a_{n},a_{n-1})$ and satisfies
\[\int_{a_{n}}^{a_{n-1}}\phi_n(u)du=1 \text{ and }r^2_M(u)\phi_n(u) \le \frac{2}{n}.\]
For every $n$, the function $\psi_n(x):=\{\int_0^{x}\int_0^{y}\phi_n(z)dzdy\}1_{(0,+\infty)}(x)$ has the following properties,
\[\psi_n \in C^2(\mathbb R), \psi_n(x) \uparrow x^+ \text{ when } n \to +\infty, \psi'_n(x)\le1 \text{ and } r_M^2(|x|)\psi_n''(x)\le \frac{2}{n}.\]
Define the stopping time $\tau_M$:
\[\tau_M:=\inf\{t>0,|X^1_t| \text{ or }|X^2_t| \ge M\}.\]
Using It\^o-formula and taking expectation, it follows that
\begin{equation*}
\begin{aligned}
E[\psi_n(X^1_{t\wedge \tau_M}-X^2_{t\wedge \tau_M})]=I_1+I_2+I_3,
\end{aligned}
\end{equation*}
where
\begin{equation*}
\begin{aligned}
&I_1=E[\int_0^t\psi'_n(X^1_{s\wedge \tau_M}-X^2_{s\wedge \tau_M})(f_1(X^1_{s\wedge \tau_M})-f_2(X^2_{s\wedge \tau_M}))ds],\\
&I_2=\frac{1}{2}E[\int_0^t\psi''_n(X^1_{s\wedge \tau_M}-X^2_{s\wedge \tau_M})(g(X^1_{s\wedge \tau_M})-g(X^2_{s\wedge \tau_M}))^2ds],\\
&I_3=E[\int_0^{t}\int_{\mathcal U}D_{X^1_{(s\wedge \tau_M)-}-X^2_{(s\wedge \tau_M)-}+h_0((s\wedge \tau_M)-,u)}\psi_n(X^1_{(s\wedge \tau_M)-}-X^2_{(s\wedge \tau_M)-})\mu(du)ds],\\
&h_0(s,u)=h(X^1_s,u)-h(X^2_s,u).
\end{aligned}
\end{equation*}
By Assumption \ref{assumption.comparison} and the construction of $\psi_n$, we see that
\begin{equation}\label{estimation.I_1}
\begin{aligned}
I_1&\le E[\int_0^t\psi'_n(X^1_{s\wedge \tau_M}-X^2_{s\wedge \tau_M})(f_1(X^1_{s\wedge \tau_M})-f_1(X^2_{s\wedge \tau_M}))ds]\\
&\le C\log M E[\int_0^t(X^1_{s\wedge \tau_M}-X^2_{s\wedge \tau_M})^+ds]+C\frac{\log MT}{M^{\sigma}}.
\end{aligned}
\end{equation}
and
\begin{equation}\label{estimation.I_2}
\begin{aligned}
I_2 \le \frac{C}{2}E[\int_0^t\psi''_n(X^1_{s\wedge \tau_M}-X^2_{s\wedge \tau_M})r_M^2(|X^1_{s\wedge \tau_M}-X^2_{s\wedge \tau_M}|)ds]\le \frac{CT}{2n}.
\end{aligned}
\end{equation}
Note that, by Taylor's expansion,
\begin{equation*}
\begin{aligned}
D_y\psi_n(x) &=(y-x)^2\int_0^1 \psi_n''(x+t(y-x))(1-t)dt\\
 &\le \frac{2(y-x)^2}{n}\int_0^1 r^{-2}_M(|x+t(y-x)|)(1-t)dt\\
&\le \frac{(y-x)^2}{n}r^{-2}_M(|x|),
\end{aligned}
\end{equation*}
if $x(y-x) \ge 0$.\\
Thus, by the monotonicity of $h$, we have
\begin{equation}\label{estimation.I_3}
I_3\le E[\int_0^t\frac{1}{n}r^{-2}_M(|X^1_{s\wedge \tau_M}-X^2_{s\wedge \tau_M}|)\int_{\mathcal U} h^2_0(s\wedge \tau_M,u)\mu(du)ds]\le \frac{CT}{n}.
\end{equation}
Combining (\ref{estimation.I_1}), (\ref{estimation.I_2}) and (\ref{estimation.I_3}), we get
\[E[\psi_n(X^1_{t\wedge \tau_M}-X^2_{t\wedge \tau_M})]\le C\log M E[\int_0^t(X^1_{s\wedge \tau_M}-X^2_{s\wedge \tau_M})^+ds]+C\frac{\log MT}{M^{\sigma}}+\frac{CT}{n}.\]
Letting $n \to +\infty$, we have
\[E[(X^1_{t\wedge \tau_M}-X^2_{t\wedge \tau_M})^+]\le C\log M E[\int_0^t(X^1_{s\wedge \tau_M}-X^2_{s\wedge \tau_M})^+ds]+C\frac{\log MT}{M^{\sigma}}.\]
Gronwall lemma implies that
\[E[(X^1_{t\wedge \tau_M}-X^2_{t\wedge \tau_M})^+]\le CT\frac{\log M}{M^{\sigma-CT}}.\]
If $T \le \frac{\sigma}{2C}$, letting $M \to +\infty$, we have
\[E[(X^1_{t}-X^2_{t})^+]\le 0.\]
Thus we prove that almost surely $X^1_t \le X^2_t$ for all $t \in [0, \frac{\sigma}{2C}]$. Starting from $\frac{\sigma}{2C}$ and applying the same argument as above, we get that $X^1_t \le X^2 _t$ for all $t \in [\frac{\sigma}{2C},\frac{\sigma}{C}]$. Repeating this procedure, one can finally show that for every $T >0$, almost surely $X^1_t \le X^2_t$ for all $t \in [0,T]$.
\end{proof}
$\qquad$\\
As an application of Theorem \ref{thm.nonexplosion} and \ref{thm.localLip}, we will study one particular case:
\begin{equation}\label{SDE.log}
\begin{aligned}
X_t=x&+\int_0^{t}X_{s-}\log|X_{s-}|ds +\int_0^t X_{s-}\sqrt{|\log|X_{s-}||}dW_s\\
&+\int_0^{t}\int_{\mathcal U}X_{s-}\sqrt{|\log|X_{s-}||}u\tilde N(du,ds).
\end{aligned}
\end{equation}
Here $x \in \mathbb R$, $W_t$ is one dimensional standard Brownian motion and $N(du,dt)$ is a Poisson random measure with the intensity  measures satisfies \\
$\int_{\mathcal U} |u|^2\mu(du) < +\infty$.\\
\qquad\\
Note that the coefficients satisfy Assumption \ref{assumption.nonexplosion} with $\rho(s)=\log(s)$. And, as point out in \cite{Bahlali_Hakassou_Ouknine_2015} (Proposition 4.1), Assumption \ref{assumption.localLip} is also satisfied.
\begin{theorem}\label{thm.strong.solution}
Let $T>0$ be fixed. For any $x \in \mathbb R$, the SDE (\ref{SDE.log}) admits a unique strong solution.
\end{theorem}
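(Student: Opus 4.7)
The strategy is to combine three ingredients: pathwise uniqueness from Theorem \ref{thm.localLip}, no-explosion from Theorem \ref{thm.nonexplosion}, and weak existence obtained by a Lipschitz-regularization argument, and then invoke the Yamada--Watanabe theorem for jump SDEs (\cite[Theorem~137]{Situ_2005}) to promote weak existence plus pathwise uniqueness into strong existence and uniqueness on $[0,T]$.

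I would begin by verifying that the coefficients $f(x)=x\log|x|$, $g(x)=x\sqrt{|\log|x||}$, $h(x,u)=x\sqrt{|\log|x||}\,u$ fall within the hypotheses of both Theorem \ref{thm.nonexplosion} and Theorem \ref{thm.localLip}, as the author asserts. For Assumption \ref{assumption.nonexplosion} one takes $\rho(s)=\log s$ on $[e,\infty)$ extended to a $C^{1}$ function on $[0,\infty)$ with $\rho\ge 1$, and checks directly that $|f(x)|^2$, $\|g(x)\|^2$, and $\int_{\mathcal U}|h(x,u)|^2\mu(du)$ are each of order $|x|^2\log(|x|^2)+1$, using $\int_{\mathcal U}|u|^2\mu(du)<\infty$. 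For Assumption \ref{assumption.localLip} with $p=2$, the two-case split $\{|x|,|y|\le 1/N\}$ versus $\{1/N\le|x|,|y|\le N\}$ sketched before Lemma \ref{lemma.cutoff} gives the required logarithmic-type local-Lipschitz bounds, and since $h(x,u)=\tilde h(x)u$ the factor $|u|^{2p}$ factors out and integrates against $\mu$.

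For weak existence I would construct a regularization as follows. Let $\phi_n$ be a cut-off as in Lemma \ref{lemma.cutoff} and let $\kappa_n$ be a smooth mollifier of width $1/n$; set $f_n=\kappa_n\ast(\phi_nf)$, $g_n=\kappa_n\ast(\phi_ng)$, and, in the $x$-variable only, $h_n=\kappa_n\ast(\phi_nh)$. Each $(f_n,g_n,h_n)$ is globally bounded and Lipschitz in $x$, so by the classical theory \cite{Ikeda_Watanabe_1989} the corresponding SDE has a unique strong solution $X^n$ on $[0,T]$. Since $(f_n,g_n,h_n)$ satisfies Assumption \ref{assumption.nonexplosion} with constants independent of $n$, the $\Phi$-estimate from the proof of Theorem \ref{thm.nonexplosion} yields a uniform bound on $E[\Phi(|X^n_t|^2)]$. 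Combined with standard Aldous-type criteria for jump semimartingales this gives tightness of $\{X^n\}$ in $D([0,T];\mathbb{R})$; extracting a weakly convergent subsequence and passing to the limit, via Skorokhod's representation and the martingale-problem formulation, identifies the limit as a weak solution of (\ref{SDE.log}). Yamada--Watanabe together with Theorem \ref{thm.localLip} then yields a unique strong solution, and Theorem \ref{thm.nonexplosion} guarantees it is defined on all of $[0,T]$.

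The main obstacle is the identification of the limit in the compensated Poisson integral $\int_0^{\cdot}\int_{\mathcal U}h_n(X^n_{s-},u)\,\tilde N(du,ds)$, since $h_n\to h$ only locally uniformly in $x$ and the jump set may have infinite $\mu$-mass. The passage to the limit must exploit the uniform $L^{2}$-control of the jump intensities provided by the $\Phi$-estimate together with a dominated-convergence argument against a majorant of the form $C(1+|X^n_{s-}|^2\rho(|X^n_{s-}|^2))|u|^2$, which is uniformly integrable thanks to Assumption \ref{assumption.nonexplosion} and $\int_{\mathcal U}|u|^2\mu(du)<\infty$. A minor technical point is the mild singularity of $f,g,h$ near the origin, but since these coefficients vanish continuously at $0$ and uniqueness is handled globally by Theorem \ref{thm.localLip}, it causes no real difficulty; once the weak-limit identification is in hand, the remaining steps are routine.
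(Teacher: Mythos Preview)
Your proposal is plausible but takes a considerably more laborious route than the paper. The paper never tries to build a weak solution of the original equation (\ref{SDE.log}) directly. Instead it localizes first: for each $N$ it considers the SDE with truncated coefficients $(\phi_N f,\phi_N g,\phi_N h)$, which are bounded and continuous, and simply quotes a ready-made weak-existence result for such coefficients (Theorem~175 in \cite{Situ_2005}). Lemma~\ref{lemma.cutoff} guarantees that Assumption~\ref{assumption.localLip} survives the cut-off, so Theorem~\ref{thm.localLip} furnishes pathwise uniqueness for the truncated SDE, and Yamada--Watanabe produces a strong solution $X^N$. Pathwise uniqueness then forces $X^{N_1}=X^{N_2}$ on $[0,\tau_{N_1}\wedge\tau_{N_2})$, so the family $\{X^N\}$ patches into a single process $X$ defined up to the lifetime $\zeta=\lim_N\tau_N$, and only at this last step is Theorem~\ref{thm.nonexplosion} invoked to show $\zeta=+\infty$.

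This localize-then-patch scheme sidesteps exactly the obstacles you flag: there is no tightness argument, no Skorokhod representation, and no identification of the limit in the compensated Poisson integral, because weak existence is only ever needed for bounded-coefficient SDEs. Your mollification step is likewise unnecessary once Situ's theorem is available. What your approach would buy is independence from that black-box result, but the cost is real: note in particular that the $\Phi$-estimate you invoke controls only $E[\Phi(|X^n_t|^2)]$, and for $\rho(s)=\log s$ one has $\Phi(\xi)\sim\log\xi$, which is far too weak to dominate $E[|f_n(X^n_s)|^2]\sim E[|X^n_s|^2\log^2|X^n_s|]$ as Aldous' criterion would require. To make tightness go through you would have to localize via the stopping times $\tau^n_R=\inf\{t:|X^n_t|\ge R\}$ anyway, using the $\Phi$-bound only to control $\mathcal P(\tau^n_R\le T)$ uniformly in $n$; at that point you are essentially reproducing the paper's localization, but with extra machinery layered on top.
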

\begin{proof}
For any $N >0$, one can find a smooth cut-off function $\phi_N$ satisfying the assumption in Lemma \ref{lemma.cutoff}. Consider the following SDE:
\begin{equation}\label{SDE.log.N}
\begin{aligned}
dX_t=&\phi_N(X_{t-})X_{t-}\log|X_{t-}|dt+\phi_N(X_{t-})X_{t-}\sqrt{|\log|X_{t-}||}dW_t\\
&+\int_{\mathcal U}\phi_N(X_{t-})X_{t-}\sqrt{|\log|X_{t-}||}u\tilde N(du,dt).
\end{aligned}
\end{equation}
Note that the coefficients of (\ref{SDE.log.N}) are continuous and bounded. By Lemma \ref{lemma.cutoff}, they also satisfy Assumption \ref{assumption.localLip}.  Hence, by Theorem 175 in \cite{Situ_2005}, we see that (\ref{SDE.log.N}) has a weak solution. Theorem \ref{thm.localLip} also implies that the pathwise uniqueness holds for this SDE. It is well-known that the existence of weak solutions and pathwise uniqueness imply the existence of strong solutions (See  \cite[Theorem 137]{Situ_2005} and Barczy et al \cite{Barczy_Li_Pap_2015}). Hence for any $N >0$, we have a strong
solution $X^N$ for (\ref{SDE.log.N}). Define the stopping time $\tau_N:=\inf\{t>0,|X^N_t| \ge N\}$. Again, by the pathwise uniqueness, we see that
\[X^{N_1}_t=X^{N_2}_t, \text{ if } t < \tau_{N_1} \wedge \tau_{N_2}.\]
Thus we define the process $X$:
\[X_t=X^N_t, \text{ if } t< \tau_N \text { for some $N$}.\]
Then we see that $X_t$ is a solution for (\ref{SDE.log}) up to a lifetime $\zeta:=\lim_{N \to \infty}\tau_N$. But Theorem \ref{assumption.nonexplosion} implies that $\zeta=+\infty$ almost surely. Thus $X$ is a strong solution for the SDE (\ref{SDE.log}).
\end{proof}
\section{Non-contact property and continuity of the stochastic flow}
In this section, we consider the stochastic flow associated with (\ref{SDE}). We will prove that the solutions $X_t(x)$ with initial value $x$ satisfy the non-contact property and have a modification continuous with respect to $x$.  The following assumption is needed.
\begin{assumption}\label{assumption.nocontact}
The map $\Gamma_u:x \to x+h(x,u)$ is homeomorphic. For the inverses $\{\Lambda_u \}$, there exists a positive constant $K>0$
such that
\[|\Lambda_u(x)|\le K(1+|x|) \text{ and } |\Lambda_u(x)-\Lambda_u(y) | \le K|x-y|, \text{ for all } x,y \in \mathbb R^m,u \in \mathcal U.\]
\end{assumption}
For the proof of Theorem \ref{thm.nocontact}, we need the following lemma.
\begin{lemma}
Set $F(x)=(\varepsilon+|x|^2)^{-1}$. If Assumption \ref{assumption.nocontact} is satisfied and Assumption \ref{assumption.localLip} holds for any $p \ge 1$, then we have: \\
(1) $F(x-x'+h(x,u)-h(x',u))\le (1+K^2)F(x-x')$ for all $x,x' \in \mathbb R^m,u \in \mathcal U$,\\
(2) There exists a constant $c'$ independent of $\varepsilon$ such that, for all $|x|,|y|\le N$
\begin{equation}\label{lemma.integrate.F}
\scriptsize
\begin{aligned}
\int_{\mathcal U}&F(x-x'+h(x,u)-h(x',u))-F(x-x')-\left \langle \nabla F(x-x'),h(x,u)-h(x',u) \right \rangle\mu(du)\\
\le &c'\{\log N F(x-x')+\log N\frac{F^2(x-x')}{N^{2\sigma}}+\log N\frac{F^{\frac{5}{2}}(x-x')}{N^{3\sigma}}\}
\end{aligned}
\end{equation}
\end{lemma}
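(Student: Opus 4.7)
The two parts use different ideas.

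Part (1) follows directly from Assumption \ref{assumption.nocontact}. Since $\Lambda_u$ is the Lipschitz inverse of $\Gamma_u(x)=x+h(x,u)$,
\[|x-x'| = |\Lambda_u(\Gamma_u(x))-\Lambda_u(\Gamma_u(x'))| \le K\,|(x-x')+(h(x,u)-h(x',u))|.\]
Squaring and adding $\varepsilon$ gives $\varepsilon+|x-x'|^2\le(1+K^2)(\varepsilon+|(x-x')+(h(x,u)-h(x',u))|^2)$; taking reciprocals yields the bound.

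For part (2), write $y=x-x'$ and $z=h(x,u)-h(x',u)$. The plan is first to establish the pointwise bound
\[|F(y+z)-F(y)-\langle\nabla F(y),z\rangle|\le C F(y)^2|z|^2 + C F(y)^{5/2}|z|^3,\]
and then to integrate. The pointwise bound comes from the explicit identity $F(y+z)-F(y) = -(|z|^2+2\langle y,z\rangle)F(y)F(y+z)$ together with $\nabla F(y)=-2yF(y)^2$, which after a short manipulation give
\[F(y+z)-F(y)-\langle\nabla F(y),z\rangle = F(y)F(y+z)\bigl\{-|z|^2+2\langle y,z\rangle F(y)|z|^2+4\langle y,z\rangle^2 F(y)\bigr\}.\]
Cauchy-Schwarz applied to the $\langle y,z\rangle$ terms, part (1) to replace $F(y+z)$ by $(1+K^2)F(y)$, and the elementary inequalities $|y|^2F(y)\le 1$ and $|y|F(y)^{1/2}\le 1$ then deliver the displayed pointwise bound.

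To integrate, Assumption \ref{assumption.localLip} applied with $p=1$ controls $\int_{\mathcal U}|z|^2\mu(du)$ by $C\log N(|y|^2+N^{-2\sigma})$, and with $p=3/2$ controls $\int_{\mathcal U}|z|^3\mu(du)$ by $C\log N(|y|^3+N^{-3\sigma})$. After multiplying by $F(y)^2$ and $F(y)^{5/2}$ respectively, the remaining $|y|$-factors are absorbed via $F(y)^2|y|^2\le F(y)$ and $F(y)^{5/2}|y|^3\le F(y)$ (both consequences of $F(y)|y|^2\le 1$), producing exactly the right-hand side of (\ref{lemma.integrate.F}).

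\textbf{Main obstacle.} The delicate step is producing the $F^{5/2}$ factor needed to balance the cubic moment $\int|z|^3\mu(du)$. A plain second-order Taylor bound only produces $F^2|z|^2$, while a third-order Taylor expansion would involve $\nabla^3 F(y+tz)$, which is of order $F(y+tz)^{5/2}$ and can blow up at intermediate points where $|y+tz|$ is small; part (1) only controls the endpoint $F(y+z)$, not $F(y+tz)$ for $t\in(0,1)$. The explicit closed-form identity above sidesteps this difficulty by expressing the full Taylor remainder directly in terms of $F(y)$ and $F(y+z)$, so that part (1) alone is sufficient to reduce the cubic term to the claimed $F(y)^{5/2}$ factor.
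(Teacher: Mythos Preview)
Your proof is correct and follows essentially the same route as the paper. Part (1) is identical. For part (2) you write the Taylor remainder as a single closed-form identity $F(y)F(y+z)\{-|z|^2+2\langle y,z\rangle F(y)|z|^2+4\langle y,z\rangle^2 F(y)\}$, whereas the paper first writes it as $-|k|^2F(w+k)F(w)-2\langle w,k\rangle F(w)(F(w+k)-F(w))$ and bounds $|F(w+k)-F(w)|$ as an intermediate step; but both reach the identical pointwise estimate $5(1+K^2)|z|^2F(y)^2+2(1+K^2)|z|^3F(y)^{5/2}$, and the integration against $\mu$ using Assumption~\ref{assumption.localLip} with $p=1$ and $p=3/2$ is the same.
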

\begin{proof}
We shall prove the first assertion. Since $\Lambda_u$ is uniformly Lipschitz continuous, we have
\[|\Lambda_u(y)-\Lambda_u(y')| \le K|y-y'|.\]
Substituting $y=\Gamma_u(x)$ and $y'=\Gamma_u(x')$ in the above inequality, we obtain that
\[|x-x'|\le K|\Gamma_u(x)-\Gamma_u(x')|, \text{ for any }x,x'.\]
Then it holds that $\varepsilon+|x-x'|^2 \le (1+K^2)(\varepsilon+|\Gamma_u(x)-\Gamma_u(x')|^2)$ for any $\varepsilon>0$, which implies that
\begin{equation*}
\begin{aligned}
F(x-x'+h(x,u)-h(x',u))=&(\varepsilon+|\Gamma_u(x)-\Gamma_u(x')|^2)^{-1}\\
\le& (1+K^2)(\varepsilon+|x-x'|^2)^{-1}\\
=& (1+K^2)F(x-x').
\end{aligned}
\end{equation*}
Now we prove the second assertion. Set $w=x-x'$ and $k=g(x,u)-g(x',u)$. Then we have the following equality
\begin{equation*}
\begin{aligned}
&F(w+k)-F(w)-\left \langle \nabla F(w),k\right \rangle\\
=&-|k|^2F(w+k)F(w)-2\left \langle  w,k\right \rangle F(w)(F(w+k)-F(w)).
\end{aligned}
\end{equation*}
It holds that $F(w+k)-F(w)=-(|k|^2+2\left \langle w,k\right\rangle)F(w+k)F(w)$. \\
Since $|w| \le F^{-\frac{1}{2}}(w)$, we have
\begin{equation*}
\begin{aligned}
|F(w+k)-F(w)|\le& |k|^2F(w+k)F(w)+2|w||k|F(w+k)F(w)\\
\le&|k|^2(1+K^2)F^2(w)+2|k|(1+K^2)F^{\frac{3}{2}}(w),
\end{aligned}
\end{equation*}
where we use the first assertion to get the second inequality.\\
Therefore,
\begin{equation*}
\begin{aligned}
&|F(w+k)-F(w)-\left \langle \nabla F(w),k\right \rangle|\\
\le&|k|^2F(w+k)F(w)+2|w||k|F(w)|F(w+k)-F(w)|\\
\le&5|k|^2(1+K^2)F^2(w)+2|k|^3(1+K^2)F^{\frac{5}{2}}(w).
\end{aligned}
\end{equation*}
Now integrate both sides of the above inequality with respect to the measure $\mu$. According to Assumption  \ref{assumption.localLip}, we see that
\begin{equation*}
\scriptsize
\begin{aligned}
\int_{\mathcal U}F(x-x'+h(x,u)-h(x',u))-F(x-x')-\left \langle \nabla F(x-x'),h(x,u)-h(x',u) \right \rangle\mu(du)\\
\le 7(1+K^2)C\log N F(w)+5C(1+K^2)\log N\frac{F^2(w)}{N^{2\sigma}}+2C(1+K^2)\log N\frac{F^{\frac{5}{2}}(w)}{N^{3\sigma}}
\end{aligned}
\end{equation*}
Thus we obtain (\ref{lemma.integrate.F}).
\end{proof}
Now we prove the non-contact property.
\begin{theorem}\label{thm.nocontact}
Assume that  Assumptions \ref{assumption.nocontact} is satisfied and Assumption \ref{assumption.localLip} holds for any $p \ge 1$. For $x \neq y$, let $X_t(x)$ and $X_t(y)$ be the solutions of SDE (\ref{SDE}) starting, respectively from $x$ and $y$. We assume that that the strong solutions for (\ref{SDE}) are conservative (i.e. $\mathcal P(\tau=+\infty)=1$). Then  we have  almost surely $X_t(x)\neq X_t(y)$ for all $0\le t\le T$.
\end{theorem}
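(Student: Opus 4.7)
The plan is to track the test function $F(z)=(\varepsilon+|z|^2)^{-1}$ of the preceding lemma along the two-point motion. Set $Z_t:=X_t(x)-X_t(y)$,
\[\tau_N:=\inf\{t\ge 0:|X_t(x)|\vee|X_t(y)|\ge N\},\qquad \zeta:=\inf\{t\ge 0:Z_t=0\}.\]
Two features drive the argument. First, $\{Z_s=0\}=\{F(Z_s)=1/\varepsilon\}$, so Chebyshev gives
\[P(\zeta\le t\wedge\tau_N)=P\bigl(Z_{\zeta\wedge t\wedge\tau_N}=0\bigr)\le\varepsilon\,E\bigl[F(Z_{\zeta\wedge t\wedge\tau_N})\bigr].\]
Second, Assumption~\ref{assumption.nocontact} makes $\Gamma_u$ injective, so $Z_{s-}\ne 0$ forces $Z_s\ne 0$: a jump cannot create a collision, and hence on $\{\zeta<\infty\}$ one has $Z_\zeta=0$ by c\`adl\`ag regularity. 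Thus a uniform-in-$\varepsilon$ upper bound for $E[F(Z_{\zeta\wedge t\wedge\tau_N})]$ forces $P(\zeta\le t\wedge\tau_N)=0$, after which conservativeness ($\tau_N\to\infty$ a.s.) and taking $t=T$ deliver the theorem.

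To produce such a bound I would apply It\^o's formula to $F(Z_t)$ on $[0,\zeta\wedge t\wedge\tau_N]$. Up to that time $|X(x)|,|X(y)|\le N$, so Assumption~\ref{assumption.localLip} is in force; the $dW$ and $\tilde N$ stochastic integrals are true martingales (bounded integrands) and vanish in expectation. What remains is the drift $\langle\nabla F(Z),f(X(x))-f(X(y))\rangle$, the Hessian term $\tfrac12\,\text{tr}\bigl(\nabla^2 F(Z)\,(g(X(x))-g(X(y)))(g(X(x))-g(X(y)))^{T}\bigr)$, and the jump compensator, which is bounded directly by~\eqref{lemma.integrate.F}. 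The identities $\nabla F(z)=-2zF(z)^2$ and $\|\nabla^2 F(z)\|\le 10\,F(z)^2$, together with the elementary estimates $|z|F(z)^{1/2}\le 1$ and $|z|^2 F(z)\le 1$, reduce the drift and Hessian terms to linear combinations of the same expressions that appear on the right of~\eqref{lemma.integrate.F}, namely $\log N\,F$, $\log N\,F^{3/2}/N^{\sigma}$, $\log N\,F^{2}/N^{2\sigma}$, and $\log N\,F^{5/2}/N^{3\sigma}$. A Gronwall-type step on the resulting integral inequality should yield
\[E\bigl[F(Z_{\zeta\wedge t\wedge\tau_N})\bigr]\le K(N,T)\,F(x-y)\le K(N,T)\,|x-y|^{-2}.\]

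The main obstacle is ensuring that $K(N,T)$ is \emph{independent} of $\varepsilon$. The correction terms $\log N\,F^{k}/N^{k\sigma}$ with $k\in\{3/2,2,5/2\}$ are not dominated by $F$ alone without paying a factor $\varepsilon^{-(k-1)}$, so a naive Gronwall produces a constant that degenerates as $\varepsilon\to 0$. Circumventing this is where the real technical work sits: the plan is to exploit the fact that Assumption~\ref{assumption.localLip} is assumed to hold for \emph{every} $p\ge 1$, thereby replacing the coarse $\mu$-integral of $|h(x,u)-h(y,u)|^{2}$ by arbitrarily high moments $|h(x,u)-h(y,u)|^{2p}$, and to combine this with a H\"older decomposition of the expectation over $\{F\le M\}$ and $\{F>M\}$ (with $M=M(N)$ chosen so that $M/N^{2\sigma}$ is negligible while $N^\sigma$ still dominates the inverse powers of $\varepsilon$ that appear). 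Once a uniform-in-$\varepsilon$ Gronwall constant is obtained, sending $\varepsilon\to 0$, then $N\to\infty$, and finally $t=T$ closes the argument via the Chebyshev bound above.
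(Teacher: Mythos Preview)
Your identification of the test function $F(z)=(\varepsilon+|z|^2)^{-1}$ and of the obstacle---the correction terms $\log N\,F^{k}/N^{2(k-1)\sigma}$ with $k>1$ that a naive Gronwall cannot absorb uniformly in $\varepsilon$---is exactly right, and you are also right that this is where the proof is decided. But the workaround you sketch (higher $p$-moments of $h$ plus a level-set decomposition of $E[F]$) is not carried out, and it is far from clear it can be: the bad powers already arise from the \emph{drift} and \emph{diffusion} increments, where no extra $p$-moments are available, and on $\{F>M\}$ you still have no mechanism to keep $F^{2}/N^{2\sigma}$ from costing a factor $\varepsilon^{-1}$. As written, the proposal leaves open precisely the step that proves the theorem.

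The paper resolves the obstacle by a simpler device that you are missing: instead of stopping at the collision time $\zeta$, it introduces the \emph{approximate} collision time
\[\zeta_N:=\inf\{t>0:|Z_t|\le N^{-\sigma}\},\qquad \varsigma_N:=\zeta_N\wedge\tau_N,\]
and applies It\^o's formula to $F(Z_t)$ only on $[0,\varsigma_N]$. The whole point is that for $s<\varsigma_N$ one has $|Z_s|>N^{-\sigma}$, hence $F(Z_s)\le N^{2\sigma}$, and therefore \emph{every} correction term obeys $F(Z_s)^{k}/N^{2(k-1)\sigma}\le F(Z_s)$. Thus $I_1+I_2\le C\log N\int_0^{t}F(Z_{s\wedge\varsigma_N})\,ds$ with $C$ independent of $\varepsilon$, and Gronwall gives $E[F(Z_{t\wedge\varsigma_N})]\le F(x-y)\,N^{Ct}$. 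Sending $\varepsilon\downarrow 0$ yields $E[|Z_{t\wedge\varsigma_N}|^{-2}]\le |x-y|^{-2}N^{Ct}$; since $|Z_{\zeta_N}|\le N^{-\sigma}$ on $\{\zeta_N\le t\wedge\tau_N\}$, Chebyshev reads off
\[P(\zeta_N\le t\wedge\tau_N)\le |x-y|^{-2}\,N^{Ct-2\sigma}.\]
For $t<2\sigma/C$ this tends to $0$ as $N\to\infty$, giving $P(\zeta\le t)=0$; iterating in $t$ covers $[0,T]$. No higher moments of $h$ and no level-set surgery are needed---the single idea of localising to $|Z|\ge N^{-\sigma}$, with the \emph{same} $N$ that governs Assumption~\ref{assumption.localLip}, is what makes the troublesome powers of $F$ collapse.
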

\begin{proof}
Denote by $X_t(x)$ and $X_t(y)$ the solutions of (\ref{SDE}) starting, respectively, from $x$ and $y$.  We set
\[\tau_N:=\inf\{t >0;|X_t(x)|\text{ or } |Y_t(y)| \ge N\},\]
and
\[\zeta_N\:=\inf\{t>0;|X_t(x)-X_t(y)| \le \frac{1}{N^{\sigma}}\}.\]
Then, as $N$ goes to $+\infty$, we have $\tau_N$ tends to $+\infty$ and $\zeta_N$ tends to $\zeta:=\inf\{t>0;|X_{t-}(x)-X_{t-}(y)|\text{ or } |X_{t}(x)-X_{t}(y)|= 0\}$. Denote by $\varsigma_N:=\zeta_N \wedge \tau_N$.\\
Consider the function $F(x)=(\varepsilon+|x|^2)^{-1}$ for all $\varepsilon>0$.
Set $\eta_t:=X_t(x)-X_t(y)$ and $h_0(s,u):=h(X_s(x),u)-h(X_s(y),u)$. Applying It\^o-formula to $F(\eta_t)$, we get
\[F(\eta_{t \wedge \varsigma_N})=F(\eta_0)+I_1(t\wedge \varsigma_N)+I_2(t\wedge \varsigma_N)+M(t\wedge \varsigma_N)\]
with
\begin{equation*}
\begin{aligned}
I_1(t)=&\int_0^t\left \langle DF(\eta_{s-}),f(X_{s-}(x))-f(X_{s-}(y)))\right \rangle ds\\
&+\frac{1}{2}\int_0^t \left \langle D^2F(\eta_{s-})(g(X_{s-}(x))-g(X_{s-}(y))),g(X_{s-}(x))-g(X_{s-}(y))\right \rangle ds,\\
I_2(t)=&\int_0^t\int_{\mathcal U}D_{\eta_{s-}+h(X_{s-}(x),u)-h(X_{s-}(y),u)}F(\eta_{s-})\mu(du)ds,\\
M(t)=&\int_0^t\left \langle DF(\eta_{s-},g(X_{s-}(x))-g(X_{s-}(y)))\right \rangle dW_s\\
&+\int_0^t\int_{\mathcal U}F(\eta_{s-}+h(X_{s-}(x),u)-h(X_{s-}(y),u))-F(\eta_{s-})\tilde N(du,ds).
\end{aligned}
\end{equation*}
According to Assumption \ref{assumption.localLip}, we see that
\begin{equation}\label{estimation.I_1.lemma}
\begin{aligned}
I_1(t)\le& 2\int_0^t{|\eta_{s-}|F^{2}(\eta_{s-})|f(X_{s-}(x))-f(X_{s-}(y)))|}ds\\
&+\int_0^t[F^{2}(\eta_{s-})+4|\eta_{s-}|^2F^{3}(\eta_{s-})]\left \| g(X_{s-}(x))-g(X_{s-}(y))\right\|^2ds\\
&\le C\log N \int_0^t F(\eta_{s-})ds+C\frac{\log N}{N^{2\sigma}}\int_0^tF^{2}(\eta_{s-})ds.
\end{aligned}
\end{equation}
By (\ref{lemma.integrate.F}), we see that
\begin{equation}\label{estimation.I_2.lemma}
\begin{aligned}
I_2(t) \le& C\int_0^t\{\log N F(\eta_{s-})+\log N\frac{F^2(\eta_{s-})}{N^{2\sigma}}+\log N\frac{F^{\frac{5}{2}}(\eta_{s-})}{N^{3\sigma}}\}ds.
\end{aligned}
\end{equation}
Since $\frac{1}{N^{2\sigma}} \le F^{-1}(\eta_{s\wedge \varsigma_N})$ for all $s <\varsigma_N$, combining (\ref{estimation.I_1.lemma}) and (\ref{estimation.I_2.lemma}), it follows that
\[I_1(t\wedge \varsigma_N)+I_2(t\wedge \varsigma_N)\le C\log N\int_0^{t\wedge \varsigma_N}F(\eta_{s-})ds.\]
Taking expectation, we have
\[E[F(\eta_{t\wedge \varsigma_N})]\le C\log NE[\int_0^tF(\eta_{s\wedge \varsigma_N})ds].\]
Using Gronwall lemma, we obtain that
\[E[F(\eta_{t\wedge \varsigma_N})]\le F(\eta_0)N^{Ct},\]
that is
\[E[(\varepsilon+ |X_{t\wedge \varsigma_N}(x)-X_{t\wedge \varsigma_N}(y)|^2)^{-1}] \le (\varepsilon +|x-y|^2)^{-1}N^{Ct}.\]
Letting $\varepsilon \to 0$ in the previous inequality, we get
\begin{equation}\label{estimation.thm.x^-2}
E[ |X_{t\wedge \varsigma_N}(x)-X_{t\wedge \varsigma_N}(y)|^{-2}]\le |x-y|^{-2}N^{Ct}.
\end{equation}
On the set $\{\zeta_N\le t \wedge \tau_N\}$, since the path is right continuous, we see that $|\eta_{\zeta_N}| \le \frac{1}{N^{\sigma}}$. Combining with (\ref{estimation.thm.x^-2}), we get that
\[N^{2\sigma}P(\zeta_N \le t\wedge\tau_N) \le |x-y|^{-2}N^{Ct},\]
that is
\[\mathcal P(\zeta_N \le t\wedge\tau_N) \le |x-y|^{-2}N^{Ct-2\sigma}.\]
Letting $N \to +\infty$ in the previous inequality, we obtain for $t<\frac{2\sigma}{C}$,
\[\mathcal P[\zeta \le t]=0.\]
Now starting from $\frac{2\sigma}{C}$ and using the same argument as above, we get for any $t\in[\frac{2\sigma}{C},\frac{4\sigma}{C}]$,
\[\mathcal P[\zeta\le t]=0.\]
It is easy to see that $T_k=\frac{2k\sigma}{C}$ goes to $+\infty$ as $k$ tends to $+\infty$. Arguing recursively on $k$, one can prove that, for any $t \ge 0$
\[\mathcal P(\zeta\le t)=0.\]
\end{proof}
Now we consider the continuity of the stochastic flow associated with (\ref{SDE}). The following theorem is essential.
\begin{theorem}\label{thm.continuous.bounded} In addition to the assumption of Theorem \ref{thm.nocontact}, we also assume that the coefficients $f$ and $g$ are uniformly bounded and \[\sup_{x}\int_{\mathcal U}|h(x,u)|^p\mu(du) <+\infty\]
for any $p>2$. Then, for any $R,T>0$ and each $p>2$ there exists a positive constant $C(p,R,T)$ such that for any $|x|,|y|\le R$ and any $t \in[0,T]$,
\[E[|X_t(x)-X_t(y)|^{2p}]\le C(p,R,T)[|x-y|^{\frac{5p}{2}}+|x-y|^{\frac{p}{2}}+|x-y|^{2p}].\]
\end{theorem}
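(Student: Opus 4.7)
The plan is to apply It\^o's formula to $G_\varepsilon(\eta_t):=(\varepsilon+|\eta_t|^2)^p$, where $\eta_t:=X_t(x)-X_t(y)$ and $\varepsilon>0$, up to the stopping time $\tau_N:=\inf\{t>0:|X_t(x)|\vee|X_t(y)|\ge N\}$; by Theorem~\ref{thm.nonexplosion} and the conservativeness hypothesis of Theorem~\ref{thm.nocontact}, $\tau_N\uparrow+\infty$ almost surely. Writing $\omega=\varepsilon+|\eta|^2$, one has $\nabla G_\varepsilon=2p\eta\omega^{p-1}$ and $\|\nabla^2 G_\varepsilon\|\le C_p\omega^{p-1}$, with $G_\varepsilon(\eta_0)\to|x-y|^{2p}$ and $G_\varepsilon(\eta_t)\to|\eta_t|^{2p}$ as $\varepsilon\to 0$.

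For the drift and Brownian diffusion contributions to $dG_\varepsilon(\eta_t)$, Assumption~\ref{assumption.localLip} (applied with $p=1$) together with Young's inequality produces, on the event $\{t<\tau_N\}$, bounds of the form $C(p)\log N\cdot G_\varepsilon(\eta_{t-})+C(p)\log N/N^{2\sigma p}$. For the jump compensator $\int_{\mathcal U}[G_\varepsilon(\eta+h_0)-G_\varepsilon(\eta)-\langle\nabla G_\varepsilon(\eta),h_0\rangle]\mu(du)$, I would carry out a Taylor expansion of $G_\varepsilon$ analogous to the one in the preceding lemma — which yielded the three-term bound $\log N\,F(w)+\log N\,F^2(w)/N^{2\sigma}+\log N\,F^{5/2}(w)/N^{3\sigma}$ — and integrate against $\mu$ using Assumption~\ref{assumption.localLip} for both $p=1$ and the given $p$, together with the uniform $L^{2p}(\mu)$ hypothesis on $h$.

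Assembling these inequalities, taking expectation and invoking Gronwall's lemma then yields an estimate of the form
\[E[G_\varepsilon(\eta_{t\wedge\tau_N})]\le\big((\varepsilon+|x-y|^2)^p+C(p,T)\log N\cdot\Sigma(N)\big)\cdot N^{C(p)T},\]
where $\Sigma(N)$ is a sum of three distinct negative powers of $N$ inherited from the three pieces of the jump compensator. Letting $\varepsilon\to 0$ reduces the left-hand side to $E[|\eta_{t\wedge\tau_N}|^{2p}]$, and choosing $N$ as a suitable negative power of $|x-y|$ turns the right-hand side into three power-law terms in $|x-y|$. After a split between small and order-one $|x-y|$ (exploiting $|x|,|y|\le R$) and extending to arbitrary $T>0$ by the iterative procedure used in the proofs of Theorems~\ref{thm.localLip}, \ref{thm.comparison} and~\ref{thm.nocontact}, these combine into the stated three-term sum $|x-y|^{2p}+|x-y|^{5p/2}+|x-y|^{p/2}$, with constant depending on $p$, $R$ and $T$.

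The principal difficulty is the Taylor analysis of the jump compensator: exactly as the preceding lemma had to keep careful track of the $|k|^2$, $|w||k|$ and $|k|^3$ contributions to $|F(w+k)-F(w)-\langle\nabla F(w),k\rangle|$ in order to produce the split $F+F^2/N^{2\sigma}+F^{5/2}/N^{3\sigma}$, the positive-power analysis for $G_\varepsilon$ must deliver an analogous three-term structure in $\omega$ and in negative powers of $N$, and these three terms must combine with the final optimization $N=N(|x-y|)$ so as to yield precisely the exponents $2p$, $5p/2$ and $p/2$. Maintaining bookkeeping consistency through this chain of estimates — in particular matching the exponent $5p/2$, which corresponds to the cubic remainder in the Taylor expansion exactly as $F^{5/2}$ did in the lemma — is the delicate step.
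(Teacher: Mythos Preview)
Your plan has a genuine gap: the step ``choose $N$ as a suitable negative power of $|x-y|$'' leaves you with a bound on $E[|\eta_{t\wedge\tau_N}|^{2p}]$, not on $E[|\eta_t|^{2p}]$, and you give no mechanism for removing the localization $\tau_N$. For a \emph{fixed} $N$ depending on $|x-y|$, the event $\{\tau_N\le t\}$ has positive probability and on it $\eta_{t\wedge\tau_N}\ne\eta_t$; so an extra argument is required. Simply letting $N\to\infty$ is not an option either, since your right-hand side carries a factor $N^{C(p)T}$ multiplying $(\varepsilon+|x-y|^2)^p$.

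The paper does \emph{not} optimize in $N$. Instead it works with the shifted function $r_N(x)=\varepsilon+N^{-2\sigma}+|x|^2$ (so that automatically $r_N\ge N^{-2\sigma}$ and the jump compensator collapses to a single-term bound $C(p)\log N\int R_N(\eta_{s-})\,ds$, not a three-term split), and then removes the localization by summing over all $N$ against the partition $\{N-1\le \sup_{[0,T]}(|X(x)|\vee|X(y)|)<N\}$. Two further ingredients are essential and missing from your sketch: the moment bound of Lemma~\ref{lemma.supreme.p-power}, which gives $\mathcal P(\sup_{[0,T]}|X|\ge N-1)\le C(q,R,T)N^{-q}$ for any $q$, and the probability estimate of Lemma~\ref{lemma.estimation.stoppingtime}, namely $\mathcal P(\zeta_N\le t\wedge\tau_N)\le C|x-y|^{2p}N^{C(p)t+2p\sigma}$. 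A Cauchy--Schwarz factorization then makes the series over $N$ converge (for $q$ large), and the exponents $p/2$ and $5p/2$ arise precisely here: $|x-y|^{p/2}=(|x-y|^{2p})^{1/4}$ is the fourth root of the probability bound on $\{\zeta_N\le t\wedge\tau_N\}$, and $5p/2=2p+p/2$ combines it with the Gronwall term. They do not come from a cubic Taylor remainder; your analogy with the $F^{5/2}/N^{3\sigma}$ term of the preceding lemma is a red herring. Finally, the iterative time-extension is unnecessary --- the summation-over-$N$ device already handles arbitrary $T$ --- and would in any case be more delicate for a moment inequality than for the almost-sure statements of Theorems~\ref{thm.localLip}--\ref{thm.nocontact}.
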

For the proof of Theorem \ref{thm.continuous.bounded}, we need the following two lemmas.
\begin{lemma}\label{lemma.estimation.stoppingtime}
Under the assumptions of Theorem \ref{thm.continuous.bounded}, for any $p>1$, we have \[\mathcal P(\zeta_N \le t \wedge \tau_N) \le (1\vee K)|x-y|^{2p}N^{C(p)t+2p\sigma},\]
for some constant $C(p)$ depending on $p$. The stopping times $\tau_N$ and $\zeta_N$ are those that defined in the proof of Theorem \ref{thm.nocontact}.
\end{lemma}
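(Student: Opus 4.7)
The plan is to repeat the It\^o-calculus argument from the proof of Theorem \ref{thm.nocontact}, but applied to $F^p(\eta_t)$ rather than $F(\eta_t)$, and with Assumption \ref{assumption.localLip} invoked at exponent $p$ instead of at $p=2$. As before, set $\eta_t := X_t(x)-X_t(y)$, $F(w):=(\varepsilon+|w|^2)^{-1}$ and $\varsigma_N:=\zeta_N\wedge\tau_N$. A direct computation gives $\nabla F^p(w)=-2p\,w\,F^{p+1}(w)$ and $D^2 F^p(w)=-2pF^{p+1}(w)\,\mathrm{Id}+4p(p+1)F^{p+2}(w)\,ww^{T}$, so that $|\nabla F^p|\le 2p\,F^{p+1/2}$ and $\|D^2 F^p\|\le C(p)F^{p+1}$. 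Applying It\^o to $F^p(\eta_{t\wedge\varsigma_N})$ yields a continuous drift $I_1$, a compensated-jump drift $I_2$, and a stochastic integral which is a true martingale because of the stopping at $\varsigma_N$.

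For $I_1$, Cauchy--Schwarz together with Assumption \ref{assumption.localLip} would give, exactly as in (\ref{estimation.I_1.lemma}),
\[I_1\le C(p)\log N\int_0^{t\wedge\varsigma_N}\Bigl[F^p(\eta_{s-})+N^{-2p\sigma}F^{p+1}(\eta_{s-})\Bigr]ds.\]
The key new ingredient is a generalisation of Lemma 4.1 from $F$ to $F^p$. I would Taylor-expand $F^p(w+k)-F^p(w)-\langle\nabla F^p(w),k\rangle$ to second order and use the pointwise inequality $F(w+k)\le(1+K^2)F(w)$ of Assumption \ref{assumption.nocontact} to control the various factors $F(w+k)$ appearing through $\nabla F^p$ and $D^2 F^p$; then, integrating against $\mu$ and invoking the moment bound (\ref{assumption.h.p-power}) at the chosen $p$, I obtain a bound on $I_2$ of the same shape as for $I_1$, with a cubic remainder of order $|k|^3 F^{p+3/2}(w)$ contributing an additional $N^{-3p\sigma}F^{p+3/2}$ term.

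On the stochastic interval $[0,\varsigma_N)$ one has $|\eta_{s-}|\ge N^{-\sigma}$ by definition of $\zeta_N$, and hence $F(\eta_{s-})\le N^{2\sigma}$. Every surplus power of $F$ above $F^p$ in the bounds for $I_1$ and $I_2$ is therefore absorbed by its accompanying $N^{-2\sigma}$ factor, and the total is dominated by $C(p)\log N\int_0^{\cdot} F^p(\eta_{s-})ds$. Taking expectations and applying Gronwall's lemma then yields
\[E[F^p(\eta_{t\wedge\varsigma_N})]\le F^p(\eta_0)\,N^{C(p)t}=(\varepsilon+|x-y|^2)^{-p}\,N^{C(p)t}.\]
On the event $\{\zeta_N\le t\wedge\tau_N\}$, right-continuity of the paths forces $|\eta_{\zeta_N}|\le N^{-\sigma}$, so that $F^p(\eta_{t\wedge\varsigma_N})\ge(\varepsilon+N^{-2\sigma})^{-p}$ there. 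A Chebyshev-type argument combined with the Gronwall bound, followed by letting $\varepsilon\to0$, yields the claimed estimate on $\mathcal P(\zeta_N\le t\wedge\tau_N)$; the constant of the form $(1\vee K)$ collects the $(1+K^2)^p$ produced by the Taylor-expansion step.

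The main obstacle is the quantitative form of the generalisation of Lemma 4.1 to $F^p$: because $D^2 F^p$ carries the factor $F^{p+2}$, the Taylor remainder must be controlled at third (and indirectly higher) order, which forces us to integrate $|h(x,u)-h(y,u)|^{2p}$, not only $|h(x,u)-h(y,u)|^{2}$, against $\mu$. This is precisely why Assumption \ref{assumption.localLip} has to hold for every $p\ge1$ in this section, and why the constant $C(p)$ in the final bound depends on $p$.
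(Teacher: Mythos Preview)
Your argument does not prove the stated inequality; it proves a different one. Running It\^o on $F^p(\eta)=(\varepsilon+|\eta|^2)^{-p}$ and using the upper bound $|\eta_{\zeta_N}|\le N^{-\sigma}$ from right-continuity, the Chebyshev step gives
\[
N^{2p\sigma}\,\mathcal P(\zeta_N\le t\wedge\tau_N)\le |x-y|^{-2p}N^{C(p)t},
\]
i.e.\ $\mathcal P(\zeta_N\le t\wedge\tau_N)\le |x-y|^{-2p}N^{C(p)t-2p\sigma}$. This is just the $p$-th power version of (\ref{estimation.thm.x^-2}), with the \emph{negative} power of $|x-y|$ and the \emph{negative} sign on $2p\sigma$. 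The lemma, however, asserts a bound proportional to $|x-y|^{2p}$ with $N^{C(p)t+2p\sigma}$; that positive power of $|x-y|$ is exactly what is used in the proof of Theorem~\ref{thm.continuous.bounded} to produce the factor $|x-y|^{p/2}$ after taking a fourth root. Your bound blows up as $|x-y|\to0$ and is useless there.

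The paper obtains the correct sign by applying It\^o to the \emph{positive} power $R(\eta)=(\varepsilon+|\eta|^2)^{p}$. Gronwall then yields $E[R(\eta_{t\wedge\varsigma_N})]\le(\varepsilon+|x-y|^2)^{p}N^{C(p)t}$, and the Chebyshev step now needs a \emph{lower} bound on $|\eta_{\zeta_N}|$, not an upper bound. Before $\zeta_N$ one has $|\eta_{s-}|\ge N^{-\sigma}$; if there is no jump at $\zeta_N$ this persists, while if there is a jump one uses Assumption~\ref{assumption.nocontact} in the form $|\Gamma_u(a)-\Gamma_u(b)|\ge K^{-1}|a-b|$ to get $|\eta_{\zeta_N}|\ge K^{-1}N^{-\sigma}$. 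This is the genuine source of the constant $(1\vee K)$ in the statement, not a byproduct of the Taylor expansion as you suggest. With $R(\eta_{\zeta_N})\ge(1\wedge K^{-1})^{2p}N^{-2p\sigma}$ the claimed inequality follows.
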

\begin{proof}
The processes $\eta_t$, $h_0(t,u)$ and the stopping times $\zeta_N$, $\tau_N$ and $\varsigma_N$ are defined as in the proof of Theorem \ref{thm.nocontact}. Set $r(x)=\varepsilon +|x|^2$ and $R(x)=r^p(x)$ for any $\varepsilon>0$ and $p>1$. Direct computation indicates that the gradient and the Hessian matrix of $R$ are
\[DR(x)=2p(\varepsilon+|x|^2)^{p-1}x,\]
and
\[D^2R(x)=2p(\varepsilon+|x|^2)^{p-1}I+4p(p-1)(\varepsilon+|x|^2)^{p-2}x \otimes x,\]
where $I$ is the identity matrix and $x \otimes x$ is the tensor product of $x$, i.e. $\left \langle (x\otimes x)\xi,\xi\right\rangle=(\left \langle x,\xi\right\rangle)^2$, for any $\xi \in \mathbb R^m$.\\
Thus we see that, for some $\theta \in [0,1]$,
\begin{equation}\label{p-power}
\begin{aligned}
D_{x+y}R(x)=&\left \langle D^2R(x+\theta y)y,y\right \rangle\\
=&2p(\varepsilon+|x+\theta y|^2)^{p-1}|y|^2+4p(p-1)(\varepsilon\\
&+|x+\theta y|^2)^{p-2}|\left \langle x+\theta y,y\right \rangle|^2\\
\le& C(p)\{r^{p-1}(x)|y|^2+r^{p-2}(x)|y|^4+r(x)|y|^{2p-2}+|y|^{2p}\},
\end{aligned}
\end{equation}
with the constant $C(p)$ depending on $p$.\\
Applying It\^o formula to $R(\eta_s)$, we have
\[R(\eta_{t \wedge \varsigma_N})=R(\eta_0)+I_1(t \wedge \varsigma_N)+I_2(t \wedge \varsigma_N)+M(t \wedge \varsigma_N)\]
with
\begin{equation*}
\begin{aligned}
I_1(t)=&\int_0^t\left \langle DR(\eta_{s-}),f(X_{s-}(x))-f(X_{s-}(y))\right \rangle ds\\
&\quad +\frac{1}{2}\int_0^t \left \langle D^2R(\eta_{s-})(g(X_{s-}(x))-g(X_{s-}(y))),g(X_{s-}(x))-g(X_{s-}(y)) \right\rangle ds,\\
I_2(t)=&\int_0^t \int_{\mathcal U}D_{\eta_{s-}+h_0(s-,u)}R(\eta_{s-})\mu(du)ds,\\
M(t)=&\int_0^t\left \langle DR(\eta_{s-}),g(X_{s-}(x))-g(X_{s-}(y))\right \rangle dW_s\\
&\quad+\int_0^t\int_{\mathcal U}[R(\eta_{s-}+h_0(s-,u))-R(\eta_{s-})]\tilde N(du,ds).
\end{aligned}
\end{equation*}
The estimation of $I_1(t)$ is almost the same as that in the proof of Theorem \ref{thm.nocontact}. Now we consider $I_2(t)$. By (\ref{p-power}) and Assumption \ref{assumption.localLip}, we see that
\begin{equation*}
\begin{aligned}
I_2(t)\le& C(p)\int_0^t \int_{\mathcal U} [r^{p-1}(\eta_{s-})|h_0(s-,u)|^2+r^{p-2}(\eta_{s-})|h_0(s-,u)|^4\\
&\qquad \qquad+r(\eta_{s-})|h_0(s-,u)|^{2p-2}+|h_0(s-,u)|^{2p}]ds\\
\le& C(p)\log N\{\int_0^t[r^p(\eta_{s-})+\frac{r^{p-1}(\eta_{s-})}{N^{2\sigma}}
+\frac{r^{p-2}(\eta_{s-})}{N^{4\sigma}}\\
&\qquad \qquad +\frac{r(\eta_{s-})}{N^{(2p-2)\sigma}}+\frac{1}{N^{2p\sigma}}]ds\}.
\end{aligned}
\end{equation*}
Since $\frac{1}{N^{2\sigma}} \le r(\eta_s)$ for $s < \varsigma_N$, we get
\[I_2(t)\le C(p)\log N\int_0^tR(\eta_{s-})ds.\]
Thus, taking expectation, it follows that
\[E[R(\eta_{t \wedge \varsigma_N})] \le R(\eta_0)+C(p)\log NE[\int_0^{t \wedge \varsigma_N}R(\eta_s)]ds.\]
Using Gronwall lemma, we obtain
\begin{equation}\label{estimation.lemma.p-power}
E[R(\eta_{t \wedge \varsigma_N})] \le R(\eta_0)N^{C(p)t}.
\end{equation}
Letting $\varepsilon \to 0$, we get
\begin{equation*}
E[|X_{t \wedge \tau_N \wedge \zeta_N}(x)-X_{t \wedge \tau_N \wedge \zeta_N}(x)|^{2p}]\le |x-y|^{2p}N^{C(p)t}.
\end{equation*}
On the subset $\{\zeta_N \le t\wedge \tau_N\}$, we see that $|\eta_s| \ge \frac{1}{N^{\sigma}}$ for all $s <\zeta_N$. Thus $|\eta_{\zeta_N}| \ge \frac{1}{N^{\sigma}}$ if the path is left continuous at $\zeta_N$. On the other hand, if there is a jump at $\zeta_N$, according to Assumption \ref{assumption.nocontact}, we have
\begin{equation*}
\begin{aligned}
|X_{\zeta_N}(x)-X_{\zeta_N}(y)|=&|\Gamma_u(X_{\zeta_N-}(x))-\Gamma_u(X_{\zeta_N-}(y))|\\
&\ge\frac{1}{K}|X_{\zeta_N-}(x)-X_{\zeta_N-}(y)|\\
&\ge\frac{1}{K}\frac{1}{N^{\sigma}}
\end{aligned}
\end{equation*}
In both cases, it holds that
\[|\eta_{\zeta_N}| \ge (1\wedge\frac{1}{K})\frac{1}{N^{\sigma}}.\]
Thus we have the following inequality
\[(1\wedge\frac{1}{K})\frac{1}{N^{2p\sigma}}\mathcal P(\zeta_N \le t \wedge \tau_N) \le |x-y|^{2p}N^{C(p)t}\]
which implies that
\[\mathcal P(\zeta_N \le t \wedge \tau_N) \le (1\vee K)|x-y|^{2p}N^{C(p)t+2p\sigma}.\]
\end{proof}
The following lemma has been proved in \cite{Kunita_2004} (Theorem 2.11, pp.332).
\begin{lemma}\label{lemma.supreme.p-power}
Consider a d-dimensional semimartingale  with the following decomposition:
\[dX_t=x+\int_0^tf(s-)ds+\int_0^tg(s-)dW_s+\int_0^t \int_{\mathcal U}h(s-,u)\tilde N(du,ds).\]
For any $p \ge 2$, there exists a constant $C(p)$ such that
\begin{equation*}
\begin{aligned}
E[\sup_{0\le s\le t}|X_s|^p]&\le C(p)\big\{|x|^p+E[(\int_0^t|f(s)|ds)^p]+E[(\int_0^t|g(s)|^2ds)^{\frac{p}{2}}]\\
&+E[(\int_0^t\int_{\mathcal U}|h(s,u)|^2\mu(du)ds)^{\frac{p}{2}}]+E[\int_0^t\int_{\mathcal U}|h(s,u)|^p\mu(du)ds]\big\}.
\end{aligned}
\end{equation*}
\end{lemma}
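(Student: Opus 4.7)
The plan is to decompose $X - x$ into its three canonical parts, a drift $A_t = \int_0^t f(s-)ds$, a continuous Brownian martingale $M_t^c = \int_0^t g(s-)dW_s$, and a purely discontinuous martingale $M_t^d = \int_0^t\int_{\mathcal U}h(s-,u)\tilde N(du,ds)$, and then to control each summand separately. Using the elementary inequality $(a_1+\cdots+a_4)^p\le C(p)(a_1^p+\cdots+a_4^p)$ valid for $p\ge 1$, the claim reduces to bounding $E[\sup_{s\le t}|A_s|^p]$, $E[\sup_{s\le t}|M_s^c|^p]$ and $E[\sup_{s\le t}|M_s^d|^p]$ individually by the corresponding right-hand side terms.

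The first two are standard. Since $s\mapsto |A_s|$ is nondecreasing, $\sup_{s\le t}|A_s|^p \le (\int_0^t|f(s)|ds)^p$ pointwise. For $M^c$, the classical Burkholder--Davis--Gundy inequality for continuous martingales gives $E[\sup_{s\le t}|M_s^c|^p]\le C(p)E[\langle M^c\rangle_t^{p/2}] = C(p)E[(\int_0^t|g(s)|^2 ds)^{p/2}]$, which is exactly the third term in the bound.

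The real work is the purely discontinuous martingale $M^d$. The BDG inequality for discontinuous martingales yields $E[\sup_{s\le t}|M_s^d|^p]\le C(p)E[[M^d]_t^{p/2}]$, where the optional quadratic variation is $[M^d]_t = \int_0^t\int_{\mathcal U}|h(s-,u)|^2 N(du,ds)$. Writing $N = \tilde N + \mu dt$ and using $(a+b)^{p/2}\le 2^{p/2-1}(a^{p/2}+b^{p/2})$ splits this estimate into a predictable compensator piece, which already produces the term $E[(\int_0^t\int_{\mathcal U}|h|^2\mu(du)ds)^{p/2}]$, and a compensated piece whose absolute $p/2$-th moment must still be estimated. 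But the compensated piece is itself a purely discontinuous martingale, so BDG can be applied again, halving the exponent. Iterating terminates after $\lceil\log_2(p/2)\rceil$ steps at an exponent below $2$, at which stage Jensen's inequality together with the Poisson intensity provide the remaining bound in terms of $E[\int_0^t\int_{\mathcal U}|h|^p\mu(du)ds]$. A Young-type interpolation then collapses all intermediate exponents back into the two claimed jump-moment quantities.

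The main obstacle is precisely this iteration -- the content of Kunita's inequality -- and the conceptual point is that the two jump terms on the right-hand side correspond to two regimes: the $L^2$-in-$\mu$ term dominates the accumulation of many small jumps (``Gaussian-like'' behaviour), while the $L^p$-in-$\mu$ term dominates rare large jumps (``Poisson-like'' behaviour). An alternative route, which avoids the iteration altogether, is to apply It\^o's formula to $|M_t^d|^p$ directly (valid for $p\ge 2$ since $x\mapsto|x|^p$ is then $C^2$ away from the origin and dominated by $C^2$ surrogates such as $(\varepsilon + |x|^2)^{p/2}$), take expectations to kill the martingale contribution, and bound the jump correction via the elementary inequality $\bigl||a+b|^p - |a|^p - p|a|^{p-2}\langle a,b\rangle\bigr|\le C(p)(|a|^{p-2}|b|^2 + |b|^p)$; a Gronwall argument on $E[\sup_{s\le t}|M_s^d|^p]$ then yields the same conclusion after an $\varepsilon\to 0$ limit.
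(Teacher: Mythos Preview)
The paper does not prove this lemma at all: it is stated and immediately attributed to Kunita (Theorem~2.11 in \cite{Kunita_2004}), with no argument given. Your proposal therefore goes further than the paper, supplying an actual proof sketch, and the route you outline --- decompose into drift, continuous martingale, and purely discontinuous martingale, handle the first two by monotonicity and continuous BDG, and treat the jump part by iterating BDG on the compensated quadratic variation until the exponent drops low enough to close with subadditivity and interpolation --- is precisely the standard argument behind Kunita's inequality. Your alternative via It\^o's formula on $(\varepsilon+|M^d|^2)^{p/2}$ is also a legitimate and well-known variant.

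One small correction: it is not true that $s\mapsto |A_s|$ is nondecreasing (the drift $f$ may change sign). What you want is the trivial bound $|A_s|=\bigl|\int_0^s f(r-)\,dr\bigr|\le \int_0^s |f(r)|\,dr$, whose right-hand side \emph{is} nondecreasing in $s$; this gives $\sup_{s\le t}|A_s|^p\le (\int_0^t|f(s)|\,ds)^p$ as claimed. This is cosmetic and does not affect the rest of the argument.
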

\begin{bf}{Proof of Theorem \ref{thm.continuous.bounded}}:\end{bf}
According to Lemma \ref{lemma.supreme.p-power}, we see that, for any $R,T>0$ and $p \ge 2$
\begin{equation}\label{estimation.thm.supreme.}
M_{p,R,T}:=\sup_{|x|\le R}E[\sup_{0\le t\le T}|X_t(x)|^p] < +\infty.
\end{equation}
Let $X_t(x)$ and $X_t(y)$ be the solutions of SDE (\ref{SDE}) starting, respectively from $x$ and $y$. The definition of the stopping times $\zeta_N$ and $\tau_N$ is the same as that in the proof of Theorem \ref{thm.nocontact}. Set $r_N(x)=\varepsilon+\frac{1}{N^{2\sigma}}+|x|^2$ and $R_N(x)=r^p_N(x)$. Since $r_N(x) \ge \frac{1}{N^{2\sigma}}$, by similar arguments as in the proof of Lemma \ref{lemma.estimation.stoppingtime}, we have the following inequality:
\[E[R_N(\eta_{t \wedge \tau_N})]\le R_N(\eta_0)+C(p)\log NE[\int_0^{t \wedge \tau_N}R_N(\eta_{s \wedge \tau_N})ds]\]
with some constant $C(p)$ only depending on $p$.\\
Thanks to Gronwall lemma, it follows that
\[E[R_N(\eta_{t \wedge \tau_N})]\le R_N(\eta_0)N^{C(p)t},\]
which is
\begin{equation}\label{estimation.thm.p-power}
E[(\varepsilon+\frac{1}{N^{2\sigma}}+|\eta_{t \wedge \tau_N}|^2)^{p}]\le (\varepsilon+\frac{1}{N^{2\sigma}}+|x-y|^2)^pN^{C(p)t}.
\end{equation}
For $t>0$, we set $Y_t(x):=\sup_{0 \le s <t}|X_s(x)|$.\\
Arguing as in \cite{Bahlali_Hakassou_Ouknine_2015}, we show that
\[(\varepsilon+|X_t(x)-X_t(y)|^2)^p=\sum_{N=1}^{+\infty}(\varepsilon+|X_t(x)-X_t(y)|^2)^p1_{\{N-1 \le Y_T(x) \vee Y_T(y)<N\}},\]
which implies that
\begin{equation*}
\scriptsize
\begin{aligned}
&(\varepsilon+|X_t(x)-X_t(y)|^2)^p\\
=&\sum_{N=1}^{+\infty}(\varepsilon+|X_{t\wedge \tau_N}(x)-X_{t\wedge \tau_N}(y)|^2)^p1_{\{N-1 \le Y_T(x) \vee Y_T(y)<N\}}\times1_{\{\zeta_N \le T \wedge \tau_N\}}\\
+&\sum_{N=1}^{+\infty}(\varepsilon+|X_{t\wedge \tau_N\wedge \zeta_N}(x)-X_{t\wedge \tau_N\wedge \zeta_N}(y)|^2)^p1_{\{N-1 \le Y_T(x) \vee Y_T(y)<N\}}
\times1_{\{\zeta_N > T \wedge \tau_N\}}
\end{aligned}
\end{equation*}
Using Cauchy-Schwartz's inequality, we get
\begin{equation*}
\begin{aligned}
&E[(\varepsilon+|X_t(x)-X_t(y)|^2)^{p}]\\
\le&\sum_{N=1}^{+\infty}E[(\varepsilon+|X_{t\wedge \tau_N}(x)-X_{t\wedge \tau_N}(y)|^2)^{2p}]^{\frac{1}{2}}\\
&\times (\mathcal P[N-1 \le Y_T(x) \vee Y_T(y)])^{\frac{1}{4}}(\mathcal P[\zeta_N \le T \wedge \tau_N])^{\frac{1}{4}}\\
&+\sum_{N=1}^{+\infty}E[(\varepsilon+|X_{t\wedge \tau_N\wedge \zeta_N}(x)-X_{t\wedge \tau_N\wedge \zeta_N}(y)|^2)^{2p}]^{\frac{1}{2}}\\
&\times (\mathcal P[N-1 \le Y_T(x) \vee Y_T(y)])^{\frac{1}{4}}(\mathcal P[\zeta_N > T \wedge \tau_N])^{\frac{1}{4}}
\end{aligned}
\end{equation*}
(\ref{estimation.lemma.p-power}) and (\ref{estimation.thm.p-power}) indicates that
\[E[(\varepsilon+|X_{t \wedge \tau_N \wedge \zeta_N}(x)-X_{t \wedge \tau_N \wedge \zeta_N}(y)|^{2})^{2p}]\le (\varepsilon+|x-y|^{2})^{2p}N^{C(2p)t}\]
and
\[E[(\varepsilon+\frac{1}{N^{2\sigma}}+|\eta_{t \wedge \tau_N}|^2)^{2p}]\le (\varepsilon+\frac{1}{N^{2\sigma}}+|x-y|^2)^{2p}N^{C(2p)t}.\]
By (\ref{estimation.thm.supreme.}), we see that, for any $q\ge2$,
\[P[N-1 \le Y_T(x) \vee Y_T(y)]\le \frac{M_{R,q,T}}{(N-1)^q}.\]
Lemma \ref{p-power} shows that
\[\mathcal P(\zeta_N \le t \wedge \tau_N) \le (1\vee K)|x-y|^{2p}N^{C(p)t+2p\sigma}.\]
Thus we have
\begin{equation*}
\begin{aligned}
&E[(\varepsilon+|X_t(x)-X_t(y)|^2)^p]\\
\le&\sum_{N=1}^{+\infty}(\varepsilon+\frac{1}{N^{2\sigma}}+|x-y|^2)^pN^{C(2p)t}\\
&\times(\frac{M_{R,q,T}}{(N-1)^q})^{\frac{1}{4}}((1\vee K)|x-y|^{2p}N^{C(p)t+2p\sigma})^{\frac{1}{4}}\\
&+\sum_{N=1}^{+\infty}(\varepsilon+|x-y|^2)^{p}N^{C(2p)t}\times(\frac{M_{R,q,T}}{(N-1)^q})^{\frac{1}{4}}.
\end{aligned}
\end{equation*}
This implies that
\begin{equation}\label{estimation.thm.series}
\begin{aligned}
&E[(\varepsilon+|X_t(x)-X_t(x)|^2)^p]\\
\le&2^{p-1}(\varepsilon+|x-y|^2)^{p}|x-y|^{\frac{p}{2}}(1\vee K)M_{R,q,T}^{\frac{1}{4}}\sum_{N=1}^{+\infty}N^{C(p,T)-\frac{q}{4}}\\
&+2^{p-1}|x-y|^{\frac{p}{2}}(1\vee K)M_{R,q,T}^{\frac{1}{4}}\sum_{N=1}^{+\infty}N^{C(p,T)-2\sigma-\frac{q}{4}}\\
&+(\varepsilon+|x-y|^2)^{p})M_{R,q,T}^{\frac{1}{4}}\sum_{N=1}^{+\infty}N^{C(p,T)-\frac{q}{4}}.
\end{aligned}
\end{equation}
Choosing $q$ sufficiently large, the right hand side of (\ref{estimation.thm.series})  will converge. Thus there exists a positive constant $C(p,R,T)$ such that
\[E[(\varepsilon+|X_t(x)-X_t(y)|^2)^p]\le C(p,R,T)[(\varepsilon+|x-y|^2)^{p}|x-y|^{\frac{p}{2}}+|x-y|^{\frac{p}{2}}+(\varepsilon+|x-y|^2)^{p}].\]
Letting $\varepsilon \to 0$, we get that
\[E[|X_t(x)-X_t(y)|^{2p}]\le C(p,R,T)[|x-y|^{\frac{5p}{2}}+|x-y|^{\frac{p}{2}}+|x-y|^{2p}].\]
\endproof
\begin{remark}\label{remark.skorohod}
Similarly, one can show that
\begin{equation}\label{estimation.remark.supreme}
E[\sup_{0\le s\le t}|X_s(x)-X_s(y)|^{2p}]\le C(p,R,T)[|x-y|^{\frac{5p}{2}}+|x-y|^{\frac{p}{2}}+|x-y|^{2p}]
\end{equation}
Consider $\mathbb D(\mathbb R^m)$ the space of all c\`adl\`ag $\mathbb R^m$-valued fucntion on $\mathbb R^{+}$ equipped with the Skorohod topology induced by the metric $d$ (see Chapter 6 in \cite{Jacob_Shiryaev_1987}). For any two path $x$ and $y$, we have $d(x_{\cdot \wedge t},y_{\cdot \wedge t})\le \sup_{0\le s\le t}|x_s-y_s|$. (\ref{estimation.remark.supreme}) and Kolmogorov theorem imply that there is a version $\bar X_{\cdot \wedge t}(x)$ of $X_{\cdot \wedge t}(x)$ such that almost surely $\bar X_{\cdot \wedge t}(x)$ is continuous in $x$ as a mapping from $\mathbb R^m$ to $\mathbb D(\mathbb R^m)$.
\end{remark}
Now we show the continuity of the stochastic flow.
\begin{theorem}\label{thm.continuous.general} In addition to the assumption of Theorem \ref{thm.nocontact}, assume that $f$ and $g$ are locally bounded and\\
\[\sup_{|x|\le R}\int_{\mathcal U}|h(x,u)|^p\mu(du) <+\infty, \text{ for any $p\ge2$ and $R>0$.} \] \\
Then, for each $t>0$, there is a version $\tilde X_t(x)$ of $X_t(x)$ that is continuous on $\mathbb R^m$ almost surely.
\end{theorem}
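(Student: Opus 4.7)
The strategy is to reduce to the globally bounded setting of Theorem \ref{thm.continuous.bounded} by localization, then patch together the resulting continuous modifications using pathwise uniqueness and non-explosion, much as in the proof of Theorem \ref{thm.strong.solution}.

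First, for each integer $R\ge1$ introduce the cutoff $\phi_R$ of Lemma \ref{lemma.cutoff} and set $f^R=\phi_R f$, $g^R=\phi_R g$, $h^R(x,u)=\phi_R(x)h(x,u)$. Since $f,g$ are locally bounded and $x\mapsto\int_{\mathcal U}|h(x,u)|^p\mu(du)$ is locally bounded for every $p\ge2$, the truncated coefficients satisfy the global boundedness hypothesis of Theorem \ref{thm.continuous.bounded}, and by Lemma \ref{lemma.cutoff} they also satisfy Assumption \ref{assumption.localLip} for every $p\ge1$. Denote by $X^R_\cdot(x)$ the unique strong solution of the truncated SDE (existence and pathwise uniqueness are provided by Theorem \ref{thm.strong.solution}'s argument, i.e.\ Yamada--Watanabe combined with Theorem \ref{thm.localLip}).

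Next, apply Theorem \ref{thm.continuous.bounded} together with Remark \ref{remark.skorohod} to $X^R_\cdot(x)$: for any $R'>0$, $p>2$, $T>0$ and $|x|,|y|\le R'$,
\[
E\Bigl[\sup_{0\le s\le T}|X^R_s(x)-X^R_s(y)|^{2p}\Bigr]\le C(p,R,R',T)\bigl[|x-y|^{5p/2}+|x-y|^{p/2}+|x-y|^{2p}\bigr].
\]
Choosing $p$ with $p/2>m$, Kolmogorov's continuity theorem yields a modification $\tilde X^R_\cdot(x)$ which is almost surely continuous in $x$ as a map from $\mathbb R^m$ into $(\mathbb D(\mathbb R^m),d)$, hence in particular $\tilde X^R_t(x)$ is continuous in $x$ for every fixed $t$. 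Define $\sigma^R(x):=\inf\{s>0:|\tilde X^R_s(x)|\ge R+1\}$; on $[0,\sigma^R(x))$ the cutoff $\phi_R$ equals $1$ along the path, so $\tilde X^R_\cdot(x)$ satisfies the original SDE (\ref{SDE}), and pathwise uniqueness (Theorem \ref{thm.localLip}) gives $\tilde X^R_s(x)=X_s(x)$ for $s<\sigma^R(x)$ and $\tilde X^{R'}_s(x)=\tilde X^R_s(x)$ for $R'\ge R$, $s<\sigma^R(x)$.

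Fix $t>0$ and a compact set $K\subset\mathbb R^m$, and consider
\[
\Omega^R_K:=\Bigl\{\sup_{x\in K}\sup_{0\le s\le t}|\tilde X^R_s(x)|< R+1\Bigr\}.
\]
By continuity of $x\mapsto\tilde X^R_{\cdot\wedge t}(x)\in\mathbb D(\mathbb R^m)$ and compactness of $K$, the family $\{\tilde X^R_{\cdot\wedge t}(x):x\in K\}$ is relatively compact in $\mathbb D$, so the above supremum is a.s.\ finite; on $\Omega^R_K$ we have $\tilde X^R_t(\cdot)\equiv X_t(\cdot)$ on $K$. Defining $\tilde X_t(x)$ as the eventual common value of $\tilde X^R_t(x)$ (and $0$ where no limit exists), we see that on $\Omega^R_K$ the map $\tilde X_t(\cdot)|_K$ coincides with the continuous function $\tilde X^R_t(\cdot)|_K$. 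Since $\mathbb R^m$ is $\sigma$-compact, continuity of $\tilde X_t$ on all of $\mathbb R^m$ will follow once one shows $P\bigl(\bigcup_R\Omega^R_K\bigr)=1$ for every compact $K$.

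\textbf{Main obstacle.} The delicate step is the last one: verifying $P(\Omega^R_K)\to1$ as $R\to\infty$ uniformly enough to cover the whole compact $K$ simultaneously. For each fixed $x$ this follows from non-explosion of the original SDE (the assumption of Theorem \ref{thm.nocontact}) combined with the identification $\tilde X^R_\cdot(x)=X_\cdot(x)$ before $\sigma^R(x)$, but transferring this to a uniform statement over $x\in K$ requires an $L^p$-estimate on $\sup_{x\in K}\sup_{s\le t}|\tilde X^R_s(x)|$. The natural route is to combine Lemma \ref{lemma.supreme.p-power} applied to the truncated flow (to bound $E[\sup_{s\le t}|\tilde X^R_s(x)|^p]$ uniformly on $K$) with the Hölder bound on $x\mapsto\tilde X^R_t(x)$ obtained above, then invoke Chebyshev to send $P((\Omega^R_K)^c)\to0$.
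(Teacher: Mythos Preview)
Your overall strategy coincides with the paper's: truncate via $\phi_R$, apply Theorem~\ref{thm.continuous.bounded} and Remark~\ref{remark.skorohod} to the truncated flow, identify $X^R$ with $X$ before the exit time by pathwise uniqueness, and patch. The divergence is in how you close the patching step.

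Your proposed resolution of the ``main obstacle'' has a real gap. The moment bound from Lemma~\ref{lemma.supreme.p-power} applied to the truncated flow, and the H\"older constant $C(p,R,R',T)$ in Theorem~\ref{thm.continuous.bounded}, both depend on $R$ through $\sup_{|x|\le R+3}|f(x)|$, $\sup_{|x|\le R+3}\|g(x)\|$, and $\sup_{|x|\le R+3}\int_{\mathcal U}|h(x,u)|^p\mu(du)$. Under merely ``locally bounded'' coefficients these can grow arbitrarily fast in $R$, so there is no reason why the resulting estimate divided by $(R+1)^p$ should tend to zero; Chebyshev does not give $\mathcal P((\Omega^R_K)^c)\to0$.

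The paper avoids this entirely by arguing pointwise rather than uniformly over $K$. Fix $x_0$ and (almost every) $\omega$. By the assumed non-explosion of the original SDE there is $R>|x_0|$ with $\tau^{R+1}_{R+1}(x_0)>t+\varepsilon$. Now use the Skorohod continuity of $x\mapsto X^{R+1}_{\cdot\wedge(t+\varepsilon)}(x)$ from Remark~\ref{remark.skorohod}\,\textemdash\,which you have already established\,\textemdash\,to produce a neighbourhood $B_\delta(x_0)$ on which $\tau^{R+1}_{R+1}(x)>t$ as well. On that neighbourhood $\tilde X_t\equiv X^{R+1}_t$, which is continuous in $x$; hence $\tilde X_t$ is continuous at $x_0$. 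No quantitative control of $\mathcal P((\Omega^R_K)^c)$ is needed. In fact your own observation that $\{\tilde X^R_{\cdot\wedge t}(x):x\in K\}$ is compact in $\mathbb D$ already contains the seed of this argument; combined with non-explosion at each $x_0$ and a finite subcover of $K$ it yields $\mathcal P(\bigcup_R\Omega^R_K)=1$ directly, without any Chebyshev step.

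A minor additional point: Skorohod convergence of $\tilde X^R_{\cdot\wedge t}(x_n)$ to $\tilde X^R_{\cdot\wedge t}(x)$ gives $\tilde X^R_t(x_n)\to\tilde X^R_t(x)$ only when the limit path is continuous at $t$. The paper invokes quasi-left continuity of $X^R$ to ensure $X^R_{t-}=X^R_t$ almost surely for the fixed $t$; you should include this step.
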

\begin{proof}
We will proceed as in \cite{Fang_Zhang_2005}. First, we assume that the coefficients are compactly support in the set $\{|x| \le R\}$. Then, by Theorem \ref{thm.continuous.bounded}
\[E[|X_t(x)-X_t(y)|^{2p}]\le C(p,R,T)[|x-y|^{\frac{5p}{2}}+|x-y|^{\frac{p}{2}}+|x-y|^{2p}].\]
Taking $p>d+1$ and using Kolmogorov theorem, we show that the solution $X_t(x)$ admits a continuous version $\tilde X_t(x)$ in $|x| \le R+1$. Moreover, since the coefficients are compact supported and the pathwise uniqueness holds for (\ref{SDE}), we have $X_t(x)=x$ for all $|x| \ge R+\frac{1}{2}$. Thus we can extend $\tilde X_t(x)$ continuously on $\mathbb R^m$. \\
In the general case, for any $R>0$, we consider a smooth function $\phi_R:\mathbb R^m \to \mathbb R$ satisfying
\[0\le \phi_R\le1, \phi_R(x)=1 \text{ for } |x|\le R+1, \phi_R(x)=0 \text{ for } |x|\ge R+3\]
and $|\phi'_R(x)|\le1$.
Consider the following SDE:
\begin{equation}
\begin{aligned}
X_t=x&+\int_0^t\phi_R(X_{s-})f(X_{s-})ds+\int_0^t\phi_R(X_{s-})g(X_{s-})dW_s\\
&+\int_0^t\int_{\mathcal U}\phi_R(X_{s-})h(X_{s-},u)\tilde N(du,ds).\label{SDE.R}
\end{aligned}
\end{equation}
Let $X^R_t(x)$ be a solution of (\ref{SDE.R}). The discussion above indicates that one can assume that $X^R_{\cdot \wedge t}(x)$ is continuous in $x$ as a $\mathbb D(\mathbb R^m)$-valued function. We know that if $\omega_n $ tends to $\omega$ in the Skorohod topology and $\omega$ is continuous at time $t$, then $\omega_n(t)$ tends to $\omega(t)$ (Proposition 2.4 in Section 6.2 of \cite{Jacob_Shiryaev_1987}, pp. 305). By the quasi-left continuity of $X^R$, we see that almost surely $X^R_{t-}=X^R_t$. Thus almost surely $X^R_t(x)$ is continuous in $x$ as a $\mathbb R^m$-valued function. Define the stopping time
\[\tau^R_N(x):=\inf\{t>0,|X^R_t(x)| \ge N\}  \]
and
\[\tau_N(x):=\inf\{t>0,|X_t(x)| \ge N\}. \]
Since the pathwise uniqueness holds, for $|x| \le R$, we have
\[X^R_t(x)=X_t(x), \text{ for any }N  \le R \text{ and } t <\tau^R_N \]
and
\[\tau_N(x)=\tau^R_N(x), \text{ for all } R \ge N.\]
For $|x|\le R$, we define
\[\tilde X_t(x)=X^{R+1}_t(x) \text{ if } t <\tau^{R+1}_{R+1}(x).\]
Then $\tilde X_t(x)$ is a version of $X_t(x)$. Let us prove that $\tilde X_t(x)$ is continuous in $x$ for almost all $\omega$. Fix $x_0$. Then, one can find some $R>|x_0|$ depending on $\omega$, such that $\tau^{R+1}_{R+1}(x) >t+\varepsilon$ for a small $\varepsilon$. By Remark \ref{remark.skorohod}, we can find a neighborhood $B_{\delta}(x_0)$ such that $\tau^{R+1}_{R+1}(x) >t+\varepsilon$ for all $x \in B_{\delta}(x_0)$. Hence, $\tilde X_t(x,\omega)=X^{R+1}_t(x,\omega)$ for all $x \in B_{\delta}(x_0)$ which implies that $\tilde X_t(x)$ is continuous at $x_0$.
\end{proof}
\bibliographystyle{siam}
\bibliography{myreference}
\end{document}